\newcommand{\qbinom}{\genfrac{[}{]}{0pt}{}}
\newcommand{\C}{\mathbb{C}}
\newcommand{\N}{\mathbb{N}}
\newcommand{\e}{\mathrm{e}}
\newcommand{\A}{\mathcal{A}}
\newcommand{\p}{\mathrm{P}}
\newcommand{\q}{\mathrm{Q}}
\newcommand{\AAA}{\mathrm{A}}
\newcommand{\B}{\mathrm{B}}
\newcommand{\E}{\mathrm{E}}
\newcommand{\G}{\mathrm{G}}
\newcommand{\Aa}{\mathscr{A}}
\newcommand{\rr}{\mathrm{R}}
\newcommand{\T}{\mathrm{T}}
\newtheorem{theorem}{Theorem}
\newtheorem{definition}{Definition}
\newtheorem{corollary}{Corollary}
\title{\textbf{Deformed Bivariate $q$-Appell Polynomials}}
\author{Ronald Orozco L\'opez}
\begin{document}

\maketitle

\begin{abstract}
In this paper, we introduce bivariate polynomial sets of deformed $q$-Appell type, and we study the algebraic properties of these sets. We show the relation between deformed bivariate $q$-Appell polynomials and deformed homogeneous polynomials. Next, we give some of their characterizations and algebraic structure. Then, we introduce the deformed $q$-Appell operators and obtain Mehler's and Rogers-type formulas of quasi-$q$-Appell polynomials. Finally, some examples of polynomial sequences of deformed $q$-Appell type are given: Bernoulli, Euler, and Genocchi types.
\end{abstract}
0{\bf Keywords:} $q$-Appell polynomials; deformed bivariate $q$-Appell polynomials; deformed $q$-Bernoulli polynomials, deformed $q$-Euler polynomials; deformed $q$-Genocchi polynomials.\\
{\bf Mathematics Subject Classification:} 05A30, 11B83, 11B68.

\section{Introduction}
For every $n\geq0$, we define the $q$-numbers by $[n]_{q}=\frac{1-q^n}{1-q}$ and the $q$-factorial by $[n]_{q}!=[1]_{1}[2]_{q}\cdots[n]_{q}$. In \cite{orozco} was introduced the deformed $q$-exponential function
\begin{equation}\label{eqn_dqexp}
    \e_{q}(z,u)=
    \begin{cases}
        \sum_{n=0}^{\infty}u^{\binom{n}{2}}\frac{z^{n}}{[n]_{q}!}&\text{ if }u\neq0;\\
        1+z&\text{ if }u=0,
    \end{cases}
\end{equation}
for all $u\in\C$. Some deformed $q$-exponential functions are:
\begin{align*}
    \e_{q}(z,1)&=e_{q}(z),\ \vert z\vert<1,\\
    \e_{q}(z,q)&=\E_{q}(z),\ z\in\C,\\
    e_{q}(z,\sqrt{q})&=\mathcal{E}_{q}(z)=\sum_{n=0}^{\infty}q^{\frac{1}{2}\binom{n}{2}}\frac{z^n}{[n]_{q}!},\ z\in\C,\\
    \e_{q}(qz,q^2)&=\mathcal{R}_{q}(z)=\sum_{n=0}^{\infty}q^{n^2}\frac{z^n}{[n]_{q}!},\ z\in\C,
\end{align*}
where the $q$-exponential functions are
\begin{equation*}
    \e_{q}(z)=\sum_{n=0}^{\infty}\frac{z^n}{[n]_{q}!},
\end{equation*}
and
\begin{equation*}
    \E_{q}(z)=\sum_{n=0}^{\infty}q^{\binom{n}{2}}\frac{z^n}{[n]_{q}!},
\end{equation*}
$\mathcal{E}_{q}(z)$ is the Exton $q$-exponential function and  $\mathcal{R}_{q}(z)$ is the Rogers-Ramanujan function. The $q$-differential operator $D_{q}$ is defined by:
\begin{equation*}
    D_{q}f(x)=\frac{f(x)-f(qx)}{(1-q)x}
\end{equation*}
and the Leibniz rule for $D_{q}$
\begin{equation}\label{eqn_leibniz}
    D_{q}^{n}\{f(x)g(x)\}=\sum_{k=0}^{n}q^{k(k-n)}\qbinom{n}{k}_{q}D_{q}^{k}\{f(x)\}D_{q}^{n-k}\{g(q^{k}x)\}.
\end{equation}
Then
\begin{equation*}
    D_{q}^nx^{k}=\frac{(q;q)_{k}}{(q;q)_{k-n}}x^{k-n}.
\end{equation*}
The well-known Appell polynomials \cite{appell} $\p_{n}(x)$ are given by
\begin{align*}
    \A(t)e^{xt}=\sum_{n=0}^{\infty}\p_{n}(x)\frac{t^n}{n!},
\end{align*}
where $\A(t)$ is the determining function of the Appell polynomials. The Appell polynomials $\p_{n}(x)$ holds
\begin{equation*}
    \frac{d}{dx}\p_{n}(x)=n\p_{n-1}(x)
\end{equation*}
for $n=0,1,2,\ldots$. Many generalizations of Appell polynomials have been given: $q$-Appell polynomials of type I (Al-Salam \cite{al-salam}) given by
\begin{align}
    \A(t)\e_{q}(xt)&=\sum_{n=0}^{\infty}\p_{n,q}(x)\frac{t^n}{[n]_{q}!},\label{eqn_gf1}\\
    D_{q}\p_{n,q}(x)&=[n]_{q}\p_{n-1,q}(x),\ \ \text{for } n=0,1,2,\ldots.\label{eqn_p1}
\end{align}
The $q$-Appell polynomials of type II (Sadjang \cite{sadjang1}) given by
\begin{align}
    \A(t)\E_{q}(xt)&=\sum_{n=0}^{\infty}\p_{n,q}(x;q)\frac{t^n}{[n]_{q}!},\label{eqn_gf2}\\
    D_{q}\p_{n,q}(x)&=[n]_{q}\p_{n-1,q}(qx;q),\ \ \text{for } n=0,1,2,\ldots.\label{eqn_p2}
\end{align}
The $(p,q)$-Appell polynomials, (Sadjang \cite{sadjang2}), given by
\begin{align}
    \A(t)\e_{p,q}(xt)&=\sum_{n=0}^{\infty}\p_{n,p,q}(x;p)\frac{t^n}{[n]_{p,q}!},\label{eqn_gf3}\\
    D_{p,q}\p_{n,p,q}(x)&=[n]_{p,q}\p_{n-1,p,q}(px;p),\ \ \text{for } n=0,1,2,\ldots,\label{eqn_p3}
\end{align}
where the $(p,q)$-numbers are defined by $[n]_{p,q}=p^{\binom{n}{2}}[n]_{Q}$, $Q=q/p$, and the $(p,q)$-derivative defined as
\begin{equation*}
    D_{p,q}f(x)=\frac{f(px)-f(qx)}{(p-q)x}.
\end{equation*}
If in Eqs. (\ref{eqn_gf1}), (\ref{eqn_gf2}), and (\ref{eqn_gf3}) we use the deformed $q$-exponential function Eq.(\ref{eqn_dqexp}), then obtain the deformed $q$-Appell polynomials 
\begin{equation}\label{eqn_dappell}
    \A(t)\e_{q}(xt,u)=\sum_{n=0}^{\infty}\p_{n,q}(x;u)\frac{t^n}{[n]_{q}!}.
\end{equation}
The deformed $q$-Appell polynomials holds
\begin{equation*}
    D_{q}\p_{n,q}(x;u)=[n]_{q}\p_{n-1,q}(ux;u).
\end{equation*}
Therefore, Eq.(\ref{eqn_dappell}) is not only a generalization of the previous Appell polynomials, but it also allows us to introduce Exton and Ramanujan type Appell polynomials, and also any other family of these polynomials by simply varying the parameter $u$.

In this paper, we introduce the deformed bivariate $q$-Appell polynomials $\p_{n,q}^{(\alpha)}(x,y;u)$ of order $\alpha$, and we study the algebraic properties of these polynomials. We show the relation between deformed bivariate $q$-Appell polynomials and deformed homogeneous polynomials $\rr_{n}(x,y;u|q)$ \cite{orozco}. Next, we give some of their characterizations and algebraic structure. Then, we introduce the deformed $q$-Appell operators and obtain Mehler's and Rogers-type formulas of quasi-$q$-Appell polynomials. Finally, some examples of polynomial sequences of deformed $q$-Appell type are given: Bernoulli, Euler, and Genocchi types.

In our work, we will use the identities for binomial coefficients:
\begin{align*}
    \binom{n+k}{2}&=\binom{n}{2}+\binom{k}{2}+nk,\\
    \binom{n-k}{2}&=\binom{n}{2}+\binom{k}{2}+k(1-n).
\end{align*}
The $q$-shifted factorial is defined by
\begin{align*}
    (a;q)_{n}&=\begin{cases}
        1&\text{ if }n=0;\\
    \prod_{k=0}^{n-1}(1-q^{k}a),&\text{ if }n\neq0,\\
    \end{cases}\hspace{1cm}q\in\C.
\end{align*}
The $q$-binomial coefficient is defined by
\begin{equation*}
\qbinom{n}{k}_{q}=\frac{(q;q)_{n}}{(q;q)_{k}(q;q)_{n-k}}.
\end{equation*}

\section{Deformed bivariate $q$-Appell polynomials}

\subsection{Definition and properties}

\begin{definition}
Let $\alpha$ be an arbitrary complex number. The $u$-deformed $q$-Appell polynomials of order $\alpha$ are defined by
\begin{equation}\label{eqn_def_uqApp1}
    (\A_{q}(t))^{\alpha}\e_{q}(tx,u)=\sum_{n=0}^{\infty}\p_{n,q}^{(\alpha)}(x;u)\frac{t^{n}}{[n]_{q}!},
\end{equation}
where the determining function is 
\begin{equation}
    (\A_{q}(t))^{\alpha}=\sum_{n=0}^{\infty}a_{n}^{(\alpha)}\frac{t^n}{[n]_{q}!},
\end{equation}
with $a_{0}^{(\alpha)}\neq0,\ \A_{q}(0)\neq0.$
\end{definition}

\begin{theorem}\label{theo_iden1}
    \begin{align}
        \p_{n,q}^{(\alpha)}(x;u)=\sum_{k=0}^{n}\qbinom{n}{k}_{q}u^{\binom{n-k}{2}}a_{k}^{(\alpha)}x^{n-k}.
    \end{align}
\end{theorem}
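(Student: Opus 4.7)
The plan is to read off the formula directly from the defining generating function in Eq.~(\ref{eqn_def_uqApp1}) by performing a $q$-Cauchy product of the two factors on the left-hand side and then comparing coefficients of $t^n/[n]_q!$ with the right-hand side.

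First, I would expand the two generating functions as power series in $t$. By hypothesis
\begin{equation*}
    (\A_{q}(t))^{\alpha}=\sum_{k=0}^{\infty}a_{k}^{(\alpha)}\frac{t^{k}}{[k]_{q}!},
\end{equation*}
and from the definition (\ref{eqn_dqexp}) of the deformed $q$-exponential,
\begin{equation*}
    \e_{q}(tx,u)=\sum_{m=0}^{\infty}u^{\binom{m}{2}}\frac{(tx)^{m}}{[m]_{q}!}=\sum_{m=0}^{\infty}u^{\binom{m}{2}}x^{m}\frac{t^{m}}{[m]_{q}!}.
\end{equation*}

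Next, I would multiply these two series term by term, grouping together the contributions to the coefficient of $t^{n}$. For any two sequences written in the form $\sum c_{k}t^{k}/[k]_{q}!$ and $\sum d_{m}t^{m}/[m]_{q}!$, the standard $q$-Cauchy product yields the coefficient $\sum_{k=0}^{n}\qbinom{n}{k}_{q}c_{k}d_{n-k}$ of $t^{n}/[n]_{q}!$, since $[n]_{q}!/([k]_{q}![n-k]_{q}!)=\qbinom{n}{k}_{q}$. Applying this with $c_{k}=a_{k}^{(\alpha)}$ and $d_{m}=u^{\binom{m}{2}}x^{m}$ and setting $m=n-k$ gives
\begin{equation*}
    (\A_{q}(t))^{\alpha}\e_{q}(tx,u)=\sum_{n=0}^{\infty}\left(\sum_{k=0}^{n}\qbinom{n}{k}_{q}u^{\binom{n-k}{2}}a_{k}^{(\alpha)}x^{n-k}\right)\frac{t^{n}}{[n]_{q}!}.
\end{equation*}

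Finally, I would compare this with the right-hand side of (\ref{eqn_def_uqApp1}), namely $\sum_{n=0}^{\infty}\p_{n,q}^{(\alpha)}(x;u)\,t^{n}/[n]_{q}!$, and equate coefficients of $t^{n}/[n]_{q}!$ to obtain the claimed identity. There is no real obstacle here: the only point that requires a tiny bit of care is handling the degenerate case $u=0$, where (\ref{eqn_dqexp}) gives $\e_{q}(tx,0)=1+tx$; but this is consistent with interpreting $0^{\binom{m}{2}}$ as $1$ for $m=0,1$ and $0$ for $m\geq 2$, so the Cauchy product collapses to the same formula with only the $k=n$ and $k=n-1$ terms surviving.
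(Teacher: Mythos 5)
Your proof is correct and is essentially the same as the paper's: both expand $(\A_{q}(t))^{\alpha}$ and $\e_{q}(tx,u)$ as series, form the $q$-Cauchy product, and compare coefficients of $t^{n}/[n]_{q}!$. Your extra remark on the $u=0$ case is a harmless refinement the paper does not bother with.
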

\begin{proof}
    \begin{align*}
        (\A_{q}(t))^{\alpha}\e_{q}(tx,u)&=\left(\sum_{n=0}^{\infty}a_{n}^{(\alpha)}\frac{t^n}{[n]_{q}!}\right)\left(\sum_{n=0}^{\infty}u^{\binom{n}{2}}\frac{x^nt^n}{[n]_{q}!}\right)\\
        &=\sum_{n=0}^{\infty}\left(\sum_{k=0}^{n}\qbinom{n}{k}_{q}u^{\binom{n-k}{2}}a_{k}^{(\alpha)}x^{n-k}\right)\frac{t^{n}}{[n]_{q}!}
    \end{align*}
\end{proof}
At $x=0$, $\p_{n,q}^{(\alpha)}(0;u)=a_{n}^{(\alpha)}$.
\begin{definition}
Let $\alpha$ be an arbitrary complex number. The $u$-deformed bivariate $q$-Appell polynomials of order $\alpha$ are defined by
\begin{equation}\label{eqn_uqApp2}
    (\A_{q}(t))^{\alpha}\e_{q}(tx)\e_{q}(ty,u)=\sum_{n=0}^{\infty}\p_{n,q}^{(\alpha)}(x,y;u)\frac{t^{n}}{[n]_{q}!},
\end{equation}
where the determining function is 
\begin{equation}
    (\A_{q}(t))^{\alpha}=\sum_{n=0}^{\infty}a_{n}^{(\alpha)}\frac{t^n}{[n]_{q}!},
\end{equation}
with $a_{0}^{(\alpha)}\neq0,\ \A_{q}(0)\neq0.$
\end{definition}
If $\alpha=0$, $\p_{n,q}^{(0)}(x,y;u)=\rr_{n}(x,y;u|q)$ and if $\alpha=1$, then $\p_{n,q}^{(1)}(x,y;u)=\p_{n,q}(x,y;u)$, the $u$-deformed $q$-Appell polynomials. 
\begin{theorem}\label{theo_iden2}
For all $\alpha\in\C$
    \begin{align}
        \p_{n,q}^{(\alpha)}(x,y;u)&=\sum_{k=0}^{n}\qbinom{n}{k}_{q}u^{\binom{n-k}{2}}\p_{k,q}^{(\alpha)}(x)y^{n-k}.\label{eqn_iden21}\\
        \p_{n,q}^{(\alpha)}(x,y;u)&=\sum_{k=0}^{n}\qbinom{n}{k}_{q}\p_{k,q}^{(\alpha)}(y;u)x^{n-k}.\label{eqn_iden22}
    \end{align}
\end{theorem}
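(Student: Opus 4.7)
The plan is to exploit the factorization of the generating function in Eq.~(\ref{eqn_uqApp2}) in two different ways, grouping the three factors $(\A_q(t))^{\alpha}$, $\e_q(tx)$, $\e_q(ty,u)$ into a product of two series and then applying the $q$-Cauchy product
\[
\left(\sum_{n\geq 0}A_n\frac{t^n}{[n]_q!}\right)\left(\sum_{n\geq 0}B_n\frac{t^n}{[n]_q!}\right)=\sum_{n\geq 0}\left(\sum_{k=0}^{n}\qbinom{n}{k}_qA_k\,B_{n-k}\right)\frac{t^n}{[n]_q!},
\]
and then equating coefficients of $t^n/[n]_q!$.

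For Eq.~(\ref{eqn_iden21}), I would first absorb the first two factors by recognizing $(\A_q(t))^{\alpha}\e_q(tx)$ as the generating function of the order-$\alpha$ univariate $q$-Appell polynomials of type~I, i.e.\ $(\A_q(t))^{\alpha}\e_q(tx)=\sum_{k\geq 0}\p_{k,q}^{(\alpha)}(x)\,t^k/[k]_q!$ (the natural extension of Eq.~(\ref{eqn_gf1}) to arbitrary order $\alpha$). The remaining factor $\e_q(ty,u)=\sum_{m\geq 0}u^{\binom{m}{2}}y^m t^m/[m]_q!$ comes directly from Eq.~(\ref{eqn_dqexp}). Multiplying and collecting the coefficient of $t^n/[n]_q!$ produces exactly $\sum_{k=0}^{n}\qbinom{n}{k}_q u^{\binom{n-k}{2}}\p_{k,q}^{(\alpha)}(x)\,y^{n-k}$, as desired.

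For Eq.~(\ref{eqn_iden22}), I would instead group the first and third factors: by the univariate Definition~1 (Eq.~(\ref{eqn_def_uqApp1})), $(\A_q(t))^{\alpha}\e_q(ty,u)=\sum_{k\geq 0}\p_{k,q}^{(\alpha)}(y;u)\,t^k/[k]_q!$. The remaining factor is $\e_q(tx)=\sum_{m\geq 0}x^m t^m/[m]_q!$. Another application of the $q$-Cauchy product and comparison of coefficients yields $\sum_{k=0}^{n}\qbinom{n}{k}_q\p_{k,q}^{(\alpha)}(y;u)\,x^{n-k}$.

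There is no real obstacle here beyond the bookkeeping: the only thing to be careful about is which factor carries the deformation parameter $u$ (so that the $u^{\binom{n-k}{2}}$ weight appears in the first expansion but not in the second, because in the second the $u$-deformation is already absorbed into $\p_{k,q}^{(\alpha)}(y;u)$). Both identities then follow directly from the uniqueness of the coefficients in the formal series expansion.
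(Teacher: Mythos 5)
Your proposal is correct and follows essentially the same route as the paper: expand the generating function of Eq.~(\ref{eqn_uqApp2}), group $(\A_q(t))^{\alpha}\e_q(tx)$ as the generating function of $\p_{k,q}^{(\alpha)}(x)$ for the first identity and $(\A_q(t))^{\alpha}\e_q(ty,u)$ as that of $\p_{k,q}^{(\alpha)}(y;u)$ for the second, then apply the $q$-Cauchy product and compare coefficients. Nothing is missing.
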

\begin{proof}
From Theorem \ref{theo_iden1}
    \begin{align*}
        (\A_{q}(t))^{\alpha}\e_{q}(tx)\e_{q}(ty,u)&=\left(\sum_{n=0}^{\infty}a_{n}^{(\alpha)}\frac{t^n}{[n]_{q}!}\right)\left(\sum_{n=0}^{\infty}\frac{x^nt^n}{[n]_{q}!}\right)\left(\sum_{n=0}^{\infty}u^{\binom{n}{2}}\frac{y^nt^n}{[n]_{q}!}\right)\\
        &=\left(\sum_{n=0}^{\infty}\sum_{k=0}^{n}\qbinom{n}{k}_{q}a_{k}^{(\alpha)}x^{n-k}\frac{t^n}{[n]_{q}!}\right)\left(\sum_{n=0}^{\infty}u^{\binom{n}{2}}\frac{y^nt^n}{[n]_{q}!}\right)\\
        &=\left(\sum_{n=0}^{\infty}\p_{n,q}^{(\alpha)}(x)\frac{t^n}{[n]_{q}!}\right)\left(\sum_{n=0}^{\infty}u^{\binom{n}{2}}\frac{y^nt^n}{[n]_{q}!}\right)\\
        &=\sum_{n=0}^{\infty}\sum_{k=0}^{n}\qbinom{n}{k}_{q}u^{\binom{n-k}{2}}\p_{k,q}^{(\alpha)}(x)y^{n-k}\frac{t^n}{[n]_{q}!}\\
        &=\sum_{n=0}^{\infty}\p_{n,q}^{(\alpha)}(x,y;u)\frac{t^n}{[n]_{q}!}.
    \end{align*}
Therefore, Eq.(\ref{eqn_iden21}) is proved. To proof Eq.(\ref{eqn_iden22}) we use
\begin{equation*}
    (\A_{q}(t))^{\alpha}\e_{q}(tx)\e_{q}(ty,u)=(\A_{q}(t)^{\alpha}\e_{q}(ty,u))\e_{q}(tx).
\end{equation*}
\end{proof}
From above theorem, $\p_{n,q}^{(\alpha)}(0,x;u)=\p_{n,q}^{(\alpha)}(x;u)$. Next, we will express the polynomials $\p_{n,q}^{(\alpha)}(x,y;u)$ as a linear combination of the polynomials $\{\rr_{k}(x,y;u|q)\}_{k=0}^{n}$.
\begin{theorem}\label{theo_iden3}
    \begin{align}
        \p_{n,q}^{(\alpha)}(x,y;u)&=\sum_{k=0}^{n}\qbinom{n}{k}_{q}a_{k}^{(\alpha)}\rr_{n-k}(x,y;u|q).
    \end{align}
\end{theorem}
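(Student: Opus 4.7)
The plan is to exploit the factorization of the generating function in Eq.~(\ref{eqn_uqApp2}) and identify the middle factor $\e_{q}(tx)\e_{q}(ty,u)$ as the generating function of the deformed homogeneous polynomials $\rr_{n}(x,y;u|q)$. Indeed, the remark following Definition 2 records that the case $\alpha=0$ of $(\A_q(t))^\alpha \e_q(tx)\e_q(ty,u)$ reduces to $\p_{n,q}^{(0)}(x,y;u) = \rr_n(x,y;u|q)$, so
\begin{equation*}
    \e_{q}(tx)\e_{q}(ty,u) = \sum_{n=0}^{\infty} \rr_{n}(x,y;u|q)\,\frac{t^{n}}{[n]_{q}!}.
\end{equation*}

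With this in hand, I would write $(\A_{q}(t))^{\alpha}\e_{q}(tx)\e_{q}(ty,u)$ as the product of the two power series $\sum_{k} a_{k}^{(\alpha)} t^{k}/[k]_{q}!$ and $\sum_{m} \rr_{m}(x,y;u|q)\, t^{m}/[m]_{q}!$, and then collect terms of degree $n$ via the standard $q$-Cauchy product, using the identity $\tfrac{1}{[k]_{q}![n-k]_{q}!} = \tfrac{1}{[n]_{q}!}\qbinom{n}{k}_{q}$. The coefficient of $t^{n}/[n]_{q}!$ on the left is $\p_{n,q}^{(\alpha)}(x,y;u)$ by Definition~2, while on the right it is $\sum_{k=0}^{n}\qbinom{n}{k}_{q}a_{k}^{(\alpha)}\rr_{n-k}(x,y;u|q)$, which yields the claimed identity.

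There is no real obstacle here: the argument is a one-step generating-function expansion that reuses the already-established interpretation of $\rr_{n}(x,y;u|q)$ together with the $q$-Cauchy product, exactly in the style of the proofs of Theorems~\ref{theo_iden1} and \ref{theo_iden2}. The only bookkeeping point to be careful about is suppressing any $u$-dependent factor $u^{\binom{n-k}{2}}$: here it is already absorbed into $\rr_{n-k}(x,y;u|q)$ itself, so that the outer convolution only carries the plain $q$-binomial weight $\qbinom{n}{k}_{q}a_{k}^{(\alpha)}$.
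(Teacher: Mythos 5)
Your proposal is correct and matches the paper's own proof essentially verbatim: the paper also rewrites $\e_{q}(tx)\e_{q}(ty,u)$ as $\sum_{n}\rr_{n}(x,y;u|q)\,t^{n}/[n]_{q}!$ and then applies the $q$-Cauchy product with $(\A_{q}(t))^{\alpha}=\sum_{n}a_{n}^{(\alpha)}t^{n}/[n]_{q}!$. Your closing remark about the $u^{\binom{n-k}{2}}$ factor being absorbed into $\rr_{n-k}(x,y;u|q)$ is exactly the right bookkeeping observation.
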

\begin{proof}
\begin{align*}
    (\A_{q}(t))^{\alpha}\e_{q}(tx)\e_{q}(ty,u)&=\left(\sum_{n=0}^{\infty}a_{n}^{(\alpha)}\frac{t^n}{[n]_{q}!}\right)\left(\sum_{n=0}^{\infty}\frac{x^nt^n}{[n]_{q}!}\right)\left(\sum_{n=0}^{\infty}u^{\binom{n}{2}}\frac{y^nt^n}{[n]_{q}!}\right)\\
    &=\left(\sum_{n=0}^{\infty}a_{n}^{(\alpha)}\frac{t^n}{[n]_{q}!}\right)\left(\sum_{n=0}^{\infty}\rr_{n}(x,y;u|q)\frac{t^n}{[n]_{q}!}\right)\\
    &=\sum_{n=0}^{\infty}\left(\sum_{k=0}^{n}\qbinom{n}{k}_{q}a_{k}^{(\alpha)}\rr_{n-k}(x,y;u|q)\right)\frac{t^n}{[n]_{q}!}.
\end{align*}    
\end{proof}

\begin{theorem}\label{theo_iden4}
    \begin{equation}
        \p_{n,q}^{(\alpha)}(x,y;u)=\T(yD_{q}|u)\{\p_{n,q}^{(\alpha)}(x)\}
    \end{equation}
where $\T(yD_{q}|u)\{x^n\}=\rr_{n}(x,y;u|q)$ (see \cite{orozco}).    
\end{theorem}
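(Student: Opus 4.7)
My plan is to reduce the statement to Theorem \ref{theo_iden3} by exploiting the linearity of the operator $\T(yD_{q}|u)$ and its defining action $\T(yD_{q}|u)\{x^{n}\} = \rr_{n}(x,y;u|q)$.

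First, I would extract an explicit expansion for $\p_{n,q}^{(\alpha)}(x)$ in monomials. This expansion appears implicitly inside the proof of Theorem \ref{theo_iden2}: comparing coefficients in the generating function $(\A_{q}(t))^{\alpha}\e_{q}(tx) = \sum_{n\ge 0} \p_{n,q}^{(\alpha)}(x)\, t^{n}/[n]_{q}!$ yields
\begin{equation*}
    \p_{n,q}^{(\alpha)}(x) = \sum_{k=0}^{n}\qbinom{n}{k}_{q} a_{k}^{(\alpha)}\, x^{n-k}.
\end{equation*}
(Equivalently, this is what Theorem \ref{theo_iden1} produces when the $u$-deformed exponential is replaced by the ordinary $\e_{q}$, so $u^{\binom{n-k}{2}}$ is absent.)

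Next, I would apply $\T(yD_{q}|u)$ to both sides. Since $\T(yD_{q}|u)$ is a linear operator (being a formal series in the $q$-derivative $D_{q}$ acting on polynomials in $x$), and since by hypothesis $\T(yD_{q}|u)\{x^{m}\} = \rr_{m}(x,y;u|q)$, I obtain
\begin{equation*}
    \T(yD_{q}|u)\{\p_{n,q}^{(\alpha)}(x)\} = \sum_{k=0}^{n}\qbinom{n}{k}_{q} a_{k}^{(\alpha)}\, \rr_{n-k}(x,y;u|q).
\end{equation*}
By Theorem \ref{theo_iden3}, the right-hand side is exactly $\p_{n,q}^{(\alpha)}(x,y;u)$, which closes the argument.

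There is no real obstacle here: the identity is essentially a reinterpretation of Theorem \ref{theo_iden3} through the operator $\T(yD_{q}|u)$. The only point that deserves care is confirming that $\T(yD_{q}|u)$ can be applied termwise to a polynomial in $x$, which is automatic because it is defined as a (formal) power series in $D_{q}$ acting on the polynomial algebra $\K[x]$, hence is $\K[y]$-linear in $x$.
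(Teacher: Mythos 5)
Your proposal is correct and is essentially the paper's own proof read in the opposite direction: the paper starts from the expansion of $\p_{n,q}^{(\alpha)}(x,y;u)$ in Theorem \ref{theo_iden3}, rewrites $\rr_{n-k}(x,y;u|q)$ as $\T(yD_{q}|u)\{x^{n-k}\}$, and pulls the operator out by linearity to recognize $\p_{n,q}^{(\alpha)}(x)=\sum_{k=0}^{n}\qbinom{n}{k}_{q}a_{k}^{(\alpha)}x^{n-k}$. The ingredients (Theorem \ref{theo_iden3}, the monomial expansion of $\p_{n,q}^{(\alpha)}(x)$, and linearity of $\T(yD_{q}|u)$) are identical, so no further comparison is needed.
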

\begin{proof}
From Theorem \ref{theo_iden3}
    \begin{align*}
        \p_{n,q}^{(\alpha)}(x,y;u)&=\sum_{k=0}^{n}\qbinom{n}{k}_{q}a_{k}^{(\alpha)}\rr_{n-k}(x,y;u|q)\\
        &=\sum_{k=0}^{n}\qbinom{n}{k}_{q}a_{k}^{(\alpha)}\T(yD_{q}|u)\{x^{n-k}\}\\
        &=\T(yD_{q}|u)\left\{\sum_{k=0}^{n}\qbinom{n}{k}_{q}a_{k}^{(\alpha)}x^{n-k}\right\}\\
        &=\T(yD_{q}|u)\{\p_{n,q}^{(\alpha)}(x)\}.
    \end{align*}
\end{proof}

\begin{theorem}\label{theo_iden5}
    \begin{equation}
        \T(yD_{q}|u)\{\A_{q}^{\alpha}(t)\e_{q}(tx)\}=\A_{q}^{\alpha}(t)\e_{q}(tx)\e_{q}(ty,u).
    \end{equation}
\end{theorem}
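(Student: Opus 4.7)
The plan is to apply the operator $\T(yD_{q}|u)$ directly to the generating-function expansion of $\A_{q}^{\alpha}(t)\e_{q}(tx)$ and then invoke Theorem \ref{theo_iden4} termwise. Since $\T(yD_{q}|u)$ is an operator in the variable $x$, both the factor $\A_{q}^{\alpha}(t)$ (which is independent of $x$) and the formal variable $t^{n}/[n]_{q}!$ may be pulled outside.

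The execution would go as follows. First I would write
\begin{equation*}
\A_{q}^{\alpha}(t)\e_{q}(tx)=\sum_{n=0}^{\infty}\p_{n,q}^{(\alpha)}(x)\frac{t^{n}}{[n]_{q}!},
\end{equation*}
which is the univariate definition, and apply $\T(yD_{q}|u)$ term by term, obtaining
\begin{equation*}
\T(yD_{q}|u)\bigl\{\A_{q}^{\alpha}(t)\e_{q}(tx)\bigr\}=\sum_{n=0}^{\infty}\T(yD_{q}|u)\bigl\{\p_{n,q}^{(\alpha)}(x)\bigr\}\frac{t^{n}}{[n]_{q}!}.
\end{equation*}
By Theorem \ref{theo_iden4}, each inner expression equals $\p_{n,q}^{(\alpha)}(x,y;u)$, so the right-hand side becomes $\sum_{n=0}^{\infty}\p_{n,q}^{(\alpha)}(x,y;u)\,t^{n}/[n]_{q}!$, which by the defining relation (\ref{eqn_uqApp2}) is precisely $\A_{q}^{\alpha}(t)\e_{q}(tx)\e_{q}(ty,u)$.

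The only genuinely delicate step is the interchange of the operator $\T(yD_{q}|u)$ with the infinite sum in $t$; as is standard in the formal power series framework used throughout this paper (and matching the treatment in \cite{orozco}), this is justified because $\T(yD_{q}|u)$ acts coefficient-wise on polynomials in $x$, so the identity holds level by level in the $t$-grading. As a sanity check one can also prove the statement more directly: since $\T(yD_{q}|u)\{x^{n}\}=\rr_{n}(x,y;u|q)$, applying $\T(yD_{q}|u)$ to $\e_{q}(tx)=\sum_{n}x^{n}t^{n}/[n]_{q}!$ yields $\sum_{n}\rr_{n}(x,y;u|q)\,t^{n}/[n]_{q}!=\e_{q}(tx)\e_{q}(ty,u)$, and multiplying by the $x$-free factor $\A_{q}^{\alpha}(t)$ gives the claim. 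This alternative route avoids any appeal to Theorem \ref{theo_iden4} but uses the same underlying fact.
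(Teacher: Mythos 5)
Your proposal is correct and follows essentially the same route as the paper: expand $\A_{q}^{\alpha}(t)\e_{q}(tx)$ as $\sum_{n}\p_{n,q}^{(\alpha)}(x)t^{n}/[n]_{q}!$, apply $\T(yD_{q}|u)$ termwise, and invoke Theorem \ref{theo_iden4} to identify each term with $\p_{n,q}^{(\alpha)}(x,y;u)$. The additional direct verification via $\T(yD_{q}|u)\{x^{n}\}=\rr_{n}(x,y;u|q)$ is a nice sanity check but not needed.
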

\begin{proof}
    \begin{align*}
        \T(yD_{q}|u)\{\A_{q}^{\alpha}(t)\e_{q}(tx)\}&=\T(yD_{q}|u)\left\{\sum_{n=0}^{\infty}\p_{n,q}^{(\alpha)}(x)\frac{t^n}{[n]_{q}!}\right\}\\
        &=\sum_{n=0}^{\infty}\T(yD_{q}|u)\{\p_{n,q}^{(\alpha)}(x)\}\frac{t^n}{[n]_{q}!}\\
        &=\sum_{n=0}^{\infty}\p_{n,q}^{(\alpha)}(x,y;u)\frac{t^n}{[n]_{q}!}\\
        &=\A_{q}^{\alpha}(t)\e_{q}(tx)\e_{q}(ty,u).
    \end{align*}
\end{proof}

\begin{theorem}\label{theo_iden6}
For $n\geq0$, the $q$-derivatives of $\p_{n,q}^{(\alpha)}(x,y;u)$ are:
    \begin{align}
        D_{q,x}\p_{n,q}^{(\alpha)}(x,y;u)&=[n]_{q}\p_{n-1,q}^{(\alpha)}(x,y;u).\\
        D_{q,y}\p_{n,q}^{(\alpha)}(x,y;u)&=[n]_{q}\p_{n-1,q}^{(\alpha)}(x,uy;u).
    \end{align}
\end{theorem}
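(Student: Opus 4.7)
The plan is to differentiate the defining generating function
\[
(\A_{q}(t))^{\alpha}\e_{q}(tx)\e_{q}(ty,u)=\sum_{n=0}^{\infty}\p_{n,q}^{(\alpha)}(x,y;u)\frac{t^{n}}{[n]_{q}!}
\]
with respect to $x$ and with respect to $y$, and in each case read off coefficients of $t^{n}/[n]_{q}!$. Since $(\A_{q}(t))^{\alpha}$ does not depend on $x$ or $y$, the only work is to compute the action of $D_{q,x}$ on $\e_{q}(tx)$ and of $D_{q,y}$ on $\e_{q}(ty,u)$.

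For the first identity I would use the well-known relation $D_{q,x}\e_{q}(tx)=t\,\e_{q}(tx)$, which follows termwise from $D_{q}x^{n}=[n]_{q}x^{n-1}$. Applying $D_{q,x}$ to the generating function therefore multiplies its left-hand side by $t$, and matching coefficients of $t^{n}/[n]_{q}!$ against $\sum_{n\ge 1}\p_{n-1,q}^{(\alpha)}(x,y;u)\,t^{n}/[n-1]_{q}!$ yields $D_{q,x}\p_{n,q}^{(\alpha)}(x,y;u)=[n]_{q}\p_{n-1,q}^{(\alpha)}(x,y;u)$.

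The less routine step, and the only real obstacle, is the $y$-derivative of the deformed exponential. Starting from the definition
\[
\e_{q}(ty,u)=\sum_{n=0}^{\infty}u^{\binom{n}{2}}\frac{(ty)^{n}}{[n]_{q}!},
\]
applying $D_{q,y}$ termwise and shifting the index $n\mapsto n+1$ gives
\[
D_{q,y}\e_{q}(ty,u)=t\sum_{n=0}^{\infty}u^{\binom{n+1}{2}}\frac{(ty)^{n}}{[n]_{q}!}.
\]
Now I would invoke the identity $\binom{n+1}{2}=\binom{n}{2}+n$ (a special case of the first binomial identity recorded in the introduction) to rewrite $u^{\binom{n+1}{2}}=u^{\binom{n}{2}}u^{n}$, so that
\[
D_{q,y}\e_{q}(ty,u)=t\sum_{n=0}^{\infty}u^{\binom{n}{2}}\frac{(tuy)^{n}}{[n]_{q}!}=t\,\e_{q}(tuy,u).
\]

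Substituting this back, differentiation of the generating function with respect to $y$ produces
\[
t\,(\A_{q}(t))^{\alpha}\e_{q}(tx)\e_{q}(tuy,u)=\sum_{n=0}^{\infty}\p_{n,q}^{(\alpha)}(x,uy;u)\frac{t^{n+1}}{[n]_{q}!},
\]
and comparing coefficients of $t^{n}/[n]_{q}!$ on both sides, after reindexing, gives exactly $D_{q,y}\p_{n,q}^{(\alpha)}(x,y;u)=[n]_{q}\p_{n-1,q}^{(\alpha)}(x,uy;u)$. The only subtlety is the $u$-shift in the second variable, which is forced by the deformed exponent $u^{\binom{n}{2}}$; everything else is formal manipulation of the generating function.
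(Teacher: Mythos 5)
Your proof is correct, but it takes a different route from the paper. The paper proves this theorem by differentiating the finite expansion of Theorem \ref{theo_iden2}, namely $\p_{n,q}^{(\alpha)}(x,y;u)=\sum_{k=0}^{n}\qbinom{n}{k}_{q}\p_{k,q}^{(\alpha)}(y;u)x^{n-k}$, term by term in $x$ and reassembling the sum via $\qbinom{n}{k}_{q}[n-k]_{q}=[n]_{q}\qbinom{n-1}{k}_{q}$; in fact the paper only writes out the $x$-derivative and leaves the $y$-case implicit. You instead work at the level of the generating function, and the whole content of your argument is the operational identity $D_{q,y}\e_{q}(ty,u)=t\,\e_{q}(tuy,u)$, which you derive correctly from $\binom{n+1}{2}=\binom{n}{2}+n$. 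Your computation of that identity is right, and the subsequent coefficient comparison is routine and valid (note that the $u=0$ case is also consistent, since the series for $\e_{q}(z,0)$ truncates to $1+z$). The generating-function route has the advantage of treating both derivatives uniformly and of isolating exactly where the $u$-shift $y\mapsto uy$ comes from, whereas the paper's termwise approach is more elementary but, as written, only covers the first identity; to complete the second one by that method one would differentiate the expansion \eqref{eqn_iden21} in $y$ and use the same exponent identity you used. Either approach is acceptable; yours is arguably the cleaner one for the $y$-derivative.
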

\begin{proof}
From Theorem \ref{theo_iden2}
    \begin{align*}
        D_{q,x}\p_{n,q}^{(\alpha)}(x,y;u)&=D_{q,x}\left(\sum_{k=0}^{n}\qbinom{n}{k}_{q}\p_{k,q}^{(\alpha)}(y;u)x^{n-k}\right)\\
        &=\sum_{k=0}^{n-1}\qbinom{n}{k}_{q}\p_{k,q}^{(\alpha)}(y;u)[n-k]_{q}x^{n-k-1}\\
        &=[n]_{q}\sum_{k=0}^{n-1}\qbinom{n-1}{k}_{q}\p_{k,q}^{(\alpha)}(y;u)x^{n-k-1}\\
        &=[n]_{q}\p_{n-1,q}^{(\alpha)}(x,y;u).
    \end{align*}
\end{proof}

\begin{theorem}\label{theo_iden7}
    \begin{align}
        D_{q,x}^{k}\p_{n,q}^{(\alpha)}(x,y;u)&=\frac{[n]_{q}!}{[n-k]_{q}!}\p_{n-k,q}^{(\alpha)}(x,y;u).\\
        D_{q,y}^{k}\p_{n,q}^{(\alpha)}(x,y;u)&=\frac{[n]_{q}!}{[n-k]_{q}!}u^{\binom{k}{2}}\p_{n-k,q}^{(\alpha)}(x,u^{k}y;u).
    \end{align}
\end{theorem}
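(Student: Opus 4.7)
The plan is to prove both identities by induction on $k$, using Theorem \ref{theo_iden6} as the base case ($k=1$), since that result gives precisely the single-derivative rules in each variable.

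For the first identity, the induction is essentially routine: assuming $D_{q,x}^{k}\p_{n,q}^{(\alpha)}(x,y;u)=\frac{[n]_{q}!}{[n-k]_{q}!}\p_{n-k,q}^{(\alpha)}(x,y;u)$, one applies $D_{q,x}$ once more and invokes Theorem \ref{theo_iden6} to obtain the $k+1$ case with prefactor $\frac{[n]_{q}!}{[n-k]_{q}!}\cdot[n-k]_{q}=\frac{[n]_{q}!}{[n-k-1]_{q}!}$. No issues arise because the $y$ argument is inert under $D_{q,x}$.

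For the second identity the induction requires more care. Assume inductively that $D_{q,y}^{k}\p_{n,q}^{(\alpha)}(x,y;u)=\frac{[n]_{q}!}{[n-k]_{q}!}u^{\binom{k}{2}}\p_{n-k,q}^{(\alpha)}(x,u^{k}y;u)$. To apply $D_{q,y}$ to the right-hand side, I first need the scaling rule $D_{q,y}\{f(cy)\}=c\,(D_{q,y}f)(cy)$, which follows directly from the definition of $D_{q}$. Applying this with $c=u^{k}$ and then invoking Theorem \ref{theo_iden6} yields
\begin{equation*}
D_{q,y}\{\p_{n-k,q}^{(\alpha)}(x,u^{k}y;u)\}=u^{k}\,[n-k]_{q}\,\p_{n-k-1,q}^{(\alpha)}(x,u^{k+1}y;u).
\end{equation*}
Multiplying by the inductive prefactor gives a factor of $u^{\binom{k}{2}+k}$, which collapses via the binomial identity $\binom{k}{2}+k=\binom{k+1}{2}$ to $u^{\binom{k+1}{2}}$, completing the induction step.

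The only real obstacle is bookkeeping: one must carefully track how the $q$-derivative interacts with the dilated argument $u^{k}y$, and confirm that the exponents on $u$ telescope correctly. Once the scaling rule for $D_{q}$ and the identity $\binom{k}{2}+k=\binom{k+1}{2}$ are in hand, the rest is mechanical.
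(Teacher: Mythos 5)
Your proposal is correct and follows the same route as the paper: induction on $k$ with the single-derivative rules of Theorem \ref{theo_iden6} as the base case. In fact your treatment of the second identity is more complete than the paper's, which only writes out the induction step for $D_{q,x}$; your explicit use of the scaling rule $D_{q,y}\{f(cy)\}=c\,(D_{q,y}f)(cy)$ and the identity $\binom{k}{2}+k=\binom{k+1}{2}$ supplies exactly the bookkeeping the paper leaves implicit.
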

\begin{proof}
The proof will be by induction on $k$. For $k=1$ we have Theorem \ref{theo_iden5}. Now, suppose the statement is true for $k$ and let us prove it for $k+1$. We have
\begin{align*}
    D_{q,x}^{k+1}\{\p_{n,q}^{(\alpha)}(x,y;u)\}&=D_{q,x}D_{q,x}^{k}\{\p_{n,q}^{(\alpha)}(x,y;u)\}\\
    &=\frac{[n]_{q}!}{[n-k]_{q}!}D_{q,x}\{\p_{n-k,q}^{(\alpha)}(x,y;u)\}\\
    &=\frac{[n]_{q}!}{[n-k]_{q}!}[n-k]_{q}\p_{n-k-1,q}^{(\alpha)}(x,y;u)\\
    &=\frac{[n]_{q}!}{[n-k-1]_{q}!}u^{\binom{k+1}{2}}\p_{n-k-1,q}(x,y;u).
\end{align*}
\end{proof}

\begin{theorem}\label{theo_iden8}
    \begin{equation}
        \p_{n,q}^{(\alpha)}(x,y;u)=\sum_{k=0}^{n}\qbinom{n}{k}_{q}u^{\binom{k}{2}}\AAA_{k,q}(a;u)y^{k}\p_{n-k,q}^{(\alpha)}(x,y;u)
    \end{equation}
    where $\AAA_{n,q}(a;u)$ is a sequence satisfying the recursion relation
    \begin{equation}
        \sum_{k=0}^{n}\qbinom{n}{k}_{q}u^{k(k-n)}a^{k}\AAA_{n-k,q}(a;u)=1.    
    \end{equation}
\end{theorem}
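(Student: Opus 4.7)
I would argue entirely through generating functions. The first step is to determine the generating function of the sequence $\{\AAA_{n,q}(a;u)\}$ from the given recursion. Using the standard identity $\binom{n}{2}=\binom{k}{2}+\binom{n-k}{2}+k(n-k)$, one has $u^{k(k-n)}=u^{\binom{k}{2}+\binom{n-k}{2}-\binom{n}{2}}$. Multiplying the recursion through by $u^{\binom{n}{2}}(yt)^{n}/[n]_{q}!$ and summing over $n\ge 0$, the left-hand side assembles into the Cauchy product
\begin{equation*}
    \left(\sum_{k\ge 0}u^{\binom{k}{2}}\frac{(ayt)^{k}}{[k]_{q}!}\right)\left(\sum_{m\ge 0}u^{\binom{m}{2}}\AAA_{m,q}(a;u)\frac{(yt)^{m}}{[m]_{q}!}\right),
\end{equation*}
while the right-hand side equals $\e_{q}(yt,u)$. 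Since the first factor is $\e_{q}(ayt,u)$, this identifies
\begin{equation*}
    \sum_{m=0}^{\infty}u^{\binom{m}{2}}\AAA_{m,q}(a;u)\frac{(yt)^{m}}{[m]_{q}!}=\frac{\e_{q}(yt,u)}{\e_{q}(ayt,u)}.
\end{equation*}

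The second step is to feed this back into the defining generating function \eqref{eqn_uqApp2} of $\p_{n,q}^{(\alpha)}(x,y;u)$. Rewriting $\e_{q}(ty,u)$ as $\e_{q}(ayt,u)\cdot\bigl[\e_{q}(yt,u)/\e_{q}(ayt,u)\bigr]$ splits the left-hand side of \eqref{eqn_uqApp2} as the product of the generating function for the shifted polynomials $\p_{n,q}^{(\alpha)}(x,ay;u)$ with the series from step one. Taking the Cauchy product and extracting the coefficient of $t^{n}/[n]_{q}!$ yields a sum of the claimed shape
\begin{equation*}
    \p_{n,q}^{(\alpha)}(x,y;u)=\sum_{k=0}^{n}\qbinom{n}{k}_{q}u^{\binom{k}{2}}\AAA_{k,q}(a;u)y^{k}\p_{n-k,q}^{(\alpha)}(x,ay;u).
\end{equation*}

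The only delicate point is the bookkeeping in step one: recognizing the factor $u^{k(k-n)}$ in the recursion as the discrepancy $u^{\binom{k}{2}+\binom{n-k}{2}-\binom{n}{2}}$ that naturally arises when one forms the Cauchy product of two deformed $q$-exponentials. Once this reconciliation is made, no induction or auxiliary lemma is required; the remaining manipulations are purely formal power series operations, and the whole argument proceeds in two short passes through the generating function.
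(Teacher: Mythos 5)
Your argument is essentially the paper's own: both factor the generating function as
\begin{equation*}
(\A_{q}(t))^{\alpha}\e_{q}(xt)\e_{q}(yt,u)=\frac{\e_{q}(yt,u)}{\e_{q}(ayt,u)}\cdot(\A_{q}(t))^{\alpha}\e_{q}(xt)\e_{q}(ayt,u),
\end{equation*}
expand the first factor as $\sum_{n\ge0}u^{\binom{n}{2}}\AAA_{n,q}(a;u)\,(yt)^{n}/[n]_{q}!$, and take the Cauchy product. Your step one is a genuine improvement: the paper merely asserts the identity for $\e_{q}(yt,u)/\e_{q}(ayt,u)$, whereas you derive it from the stated recursion via $u^{k(k-n)}=u^{\binom{k}{2}+\binom{n-k}{2}-\binom{n}{2}}$, and that reconciliation is correct.

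The one substantive discrepancy is in the conclusion, and it is resolved in your favor. You obtain $\p_{n-k,q}^{(\alpha)}(x,ay;u)$ inside the sum, while the theorem as printed has $\p_{n-k,q}^{(\alpha)}(x,y;u)$. Your version is the correct one: by Eq.~(\ref{eqn_uqApp2}) the factor $(\A_{q}(t))^{\alpha}\e_{q}(xt)\e_{q}(ayt,u)$ generates $\p_{n,q}^{(\alpha)}(x,ay;u)$, not $\p_{n,q}^{(\alpha)}(x,y;u)$, and the paper's proof silently drops the $a$ at exactly this point. The printed identity already fails at $n=1$ unless $a=1$: there the right side minus the left side equals $(1-a)\,y\,a_{0}^{(\alpha)}\AAA_{1,q}(a;u)\ne0$ in general, whereas with $\p_{0,q}^{(\alpha)}(x,ay;u)$ and $\p_{1,q}^{(\alpha)}(x,ay;u)$ on the right the two sides agree. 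So treat your result as a corrected statement of the theorem rather than a proof of the statement as written; no further work is needed on your end.
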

\begin{proof}
    \begin{align*}
        (\A_{q}(t))^{\alpha}\e_{q}(xt)\e_{q}(yt,u)=\frac{\e_{q}(yt,u)}{\e_{q}(ayt,u)}\cdot(\A_{q}(t))^{\alpha}\e_{q}(xt)\e_{q}(ayt,u).
    \end{align*}
We have the identity
\begin{equation*}
    \frac{\e_{q}(yt,u)}{\e_{q}(ayt,u)}=\sum_{n=0}^{\infty}u^{\binom{n}{2}}\AAA_{n,q}(a;u)\frac{t^ny^n}{[n]_{q}!}.
\end{equation*}
Then
\begin{align*}
    &(\A_{q}(t))^{\alpha}\e_{q}(xt)\e_{q}(yt,u)\\
    &=\left(\sum_{n=0}^{\infty}u^{\binom{n}{2}}\AAA_{n,q}(a;u)y^n\frac{t^{n}}{[n]_{q}!}\right)\left(\sum_{n=0}^{\infty}\p_{n,q}^{(\alpha)}(x,y;u)\frac{t^n}{[n]_{q}!}\right)\\
    &=\sum_{n=0}^{\infty}\left(\sum_{k=0}^{n}\qbinom{n}{k}_{q}u^{\binom{k}{2}}\AAA_{k,q}(a;u)y^{k}\p_{n-k,q}^{(\alpha)}(x,y;u)\right)\frac{t^{n}}{[n]_{q}!}
\end{align*}
\end{proof}
The first few values of sequence $\AAA_{n,q}(a;u)$ are:
\begin{align*}
    \AAA_{0,q}(a;u)&=1,\\
    \AAA_{1,q}(a;u)&=1-a,\\
    \AAA_{2,q}(a;u)&=1-[2]_{q}u^{-1}(1-a)a-a^2,\\
    \AAA_{3,q}(a;u)&=1-[3]_{q}u^{-2}a-q[3]_{q}u^{-2}a^{2}(1-a)-[3]_{q}u^{-2}a^{3},\\
\end{align*}
If $u=1$, then $\AAA_{n,q}(a;1)=(a;q)_{n}$ and if $u=q$, then $\AAA_{n,q}(a;q)=(a;q^{-1})_{n}$.

\begin{theorem}\label{theo_iden9}
Let $n\in\N$ and $\alpha,\beta$ be real or complex numbers. Then we have
    \begin{equation}
        \p_{n,q}^{(\alpha+\beta)}(x,y;u)=\sum_{k=0}^{n}\qbinom{n}{k}_{q}\p_{k,q}^{(\alpha)}(x)\p_{n-k,q}^{(\beta)}(y;v).
    \end{equation}
\end{theorem}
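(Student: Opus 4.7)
The plan is to exploit the multiplicative structure of the determining function via the factorization
\[
(\A_{q}(t))^{\alpha+\beta}\e_{q}(tx)\e_{q}(ty,u)=\bigl[(\A_{q}(t))^{\alpha}\e_{q}(tx)\bigr]\cdot\bigl[(\A_{q}(t))^{\beta}\e_{q}(ty,u)\bigr],
\]
so that the bivariate generating function of order $\alpha+\beta$ splits into a product of two univariate generating functions of orders $\alpha$ and $\beta$. First I would expand the left-hand side using Eq.~(\ref{eqn_uqApp2}) with parameter $\alpha+\beta$, producing the coefficient series $\sum_{n\geq0}\p_{n,q}^{(\alpha+\beta)}(x,y;u)\,t^{n}/[n]_{q}!$.

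Next I would expand each factor on the right: by Al-Salam's generating function (Eq.~(\ref{eqn_gf1})) the first bracket is $\sum_{k\geq0}\p_{k,q}^{(\alpha)}(x)\,t^{k}/[k]_{q}!$, while by Eq.~(\ref{eqn_dappell}) the second bracket equals $\sum_{m\geq0}\p_{m,q}^{(\beta)}(y;u)\,t^{m}/[m]_{q}!$. Taking the Cauchy product, collecting powers by setting $n=k+m$, and using the elementary identity $\frac{1}{[k]_{q}!\,[m]_{q}!}=\frac{1}{[n]_{q}!}\qbinom{n}{k}_{q}$, the right-hand side becomes
\[
\sum_{n\geq0}\left(\sum_{k=0}^{n}\qbinom{n}{k}_{q}\p_{k,q}^{(\alpha)}(x)\,\p_{n-k,q}^{(\beta)}(y;u)\right)\frac{t^{n}}{[n]_{q}!}.
\]
Comparing coefficients of $t^{n}/[n]_{q}!$ on both sides yields the claimed identity.

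There is essentially no obstacle; the argument is a direct generating-function consequence of the homomorphism $\alpha\mapsto(\A_{q}(t))^{\alpha}$ under addition of exponents, a pattern that mirrors the classical Appell case. The only point that deserves a clarifying comment is the reading of $\p_{k,q}^{(\alpha)}(x)$ in the statement: it denotes the (undeformed) $q$-Appell polynomial of type I arising from the $\e_{q}(tx)$ factor, precisely as employed in Theorems~\ref{theo_iden2} and~\ref{theo_iden4}. The symbol $v$ appearing in the last factor on the right-hand side of the statement is evidently a typographical slip for the deformation parameter $u$, and the proof produces $u$ in that position.
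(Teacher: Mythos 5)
Your proposal is correct and follows essentially the same route as the paper: factor $(\A_{q}(t))^{\alpha+\beta}$ as $(\A_{q}(t))^{\alpha}(\A_{q}(t))^{\beta}$, pair the first factor with $\e_{q}(tx)$ and the second with $\e_{q}(ty,u)$, and compare the Cauchy product with the defining generating function of $\p_{n,q}^{(\alpha+\beta)}(x,y;u)$. Your observation that the $v$ in the statement is a typographical slip for $u$ is also consistent with the paper, whose own proof uses $u$ and $v$ interchangeably in that position.
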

\begin{proof}
On the one side
    \begin{align*}
        (\A_{q}(t))^{\alpha+\beta}\e_{q}(tx)\e_{q}(ty,u)&=\left(\sum_{n=0}^{\infty}\p_{n,q}^{(\alpha)}(x)\frac{t^n}{[n]_{q}!}\right)\left(\sum_{n=0}^{\infty}\p_{n,q}^{(\beta)}(y;v)\frac{t^{n}}{[n]_{q}!}\right)\\
        &=\sum_{n=0}^{\infty}\left(\sum_{k=0}^{n}\qbinom{n}{k}_{q}\p_{k,q}^{(\alpha)}(x)\p_{n-k,q}^{(\beta)}(y,u)\right)\frac{t^n}{[n]_{q}!}.
    \end{align*}
On the other hand
\begin{align*}
    (\A_{q}(t))^{\alpha+\beta}\e_{q}(tx)\e_{q}(yt,u)=\sum_{n=0}^{\infty}\p_{n,q}^{(\alpha+\beta)}(x,y;u)\frac{t^{n}}{[n]_{q}!}.
\end{align*}
\end{proof}

\begin{corollary}
Let $n\in\N$ and $\alpha$ be real or complex numbers. Then we have
    \begin{equation}
        \p_{n,q}^{(2\alpha)}(x,y;u)=\sum_{k=0}^{n}\qbinom{n}{k}_{q}\p_{k,q}^{(\alpha)}(x)\p_{n-k,q}^{(\alpha)}(y;u).
    \end{equation}
\end{corollary}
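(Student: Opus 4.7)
The plan is essentially immediate: the corollary is the diagonal specialization $\beta=\alpha$ of Theorem \ref{theo_iden9}. So my proof proposal is to observe that Theorem \ref{theo_iden9} establishes the general Cauchy-type convolution identity
\[
\p_{n,q}^{(\alpha+\beta)}(x,y;u)=\sum_{k=0}^{n}\qbinom{n}{k}_{q}\p_{k,q}^{(\alpha)}(x)\p_{n-k,q}^{(\beta)}(y;u),
\]
for all complex $\alpha,\beta$, and then specialize by setting $\beta=\alpha$. Substituting $\alpha+\beta=2\alpha$ on the left and $\beta=\alpha$ on the right yields the stated identity verbatim.

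Since the ambient identity is already proved, there is no real obstacle. If one wanted to avoid even this one-line citation, one could redo the generating function computation directly: write
\[
(\A_{q}(t))^{2\alpha}\e_{q}(tx)\e_{q}(ty,u) = \bigl((\A_{q}(t))^{\alpha}\e_{q}(tx)\bigr)\cdot\bigl((\A_{q}(t))^{\alpha}\e_{q}(ty,u)\bigr),
\]
and use the series expansions from (\ref{eqn_gf1}) for the first factor and (\ref{eqn_def_uqApp1}) for the second, then apply the $q$-Cauchy product to collect the coefficient of $t^{n}/[n]_{q}!$.

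The only subtlety worth flagging is notational: Theorem \ref{theo_iden9} as printed uses the symbol $v$ in one occurrence on the right-hand side, which appears to be a typo for $u$ (otherwise the generating-function identity in the proof would not close). In the corollary this is naturally resolved since both Appell factors carry the same deformation parameter $u$, and the specialization $\beta=\alpha$ together with $v=u$ produces the symmetric convolution asserted. Thus the entire proof reduces to one substitution, and no delicate step is needed.
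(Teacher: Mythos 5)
Your proof is correct and matches the paper's intent exactly: the corollary is stated without proof immediately after Theorem \ref{theo_iden9} precisely because it is the specialization $\beta=\alpha$, which is all you do. Your side remark about the stray $v$ in the printed statement of Theorem \ref{theo_iden9} being a typo for $u$ is also accurate and worth noting.
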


\begin{corollary}
Let $n\in\N$ and $\alpha$ be real or complex numbers. Then we have
    \begin{equation}
        \rr_{n}(x,y;u|q)=\sum_{k=0}^{n}\qbinom{n}{k}_{q}\p_{k,q}^{(\alpha)}(x)\p_{n-k,q}^{(-\alpha)}(y;u).
    \end{equation}
\end{corollary}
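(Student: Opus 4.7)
The plan is to specialize Theorem \ref{theo_iden9} to the case $\beta=-\alpha$. With this choice, the exponent $\alpha+\beta$ on the determining function $\A_{q}(t)$ collapses to $0$, so the left-hand side of the identity in Theorem \ref{theo_iden9} becomes $\p_{n,q}^{(0)}(x,y;u)$. I would then invoke the identification recorded immediately after Definition~2, namely $\p_{n,q}^{(0)}(x,y;u)=\rr_{n}(x,y;u|q)$, which is itself a direct consequence of the generating function \eqref{eqn_uqApp2} when $(\A_{q}(t))^{0}=1$.

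Concretely, the steps in order are: first, set $\beta=-\alpha$ in the identity of Theorem \ref{theo_iden9}; second, rewrite the resulting left-hand side as $\p_{n,q}^{(0)}(x,y;u)$; third, apply the specialization $\p_{n,q}^{(0)}(x,y;u)=\rr_{n}(x,y;u|q)$; fourth, leave the right-hand side in its given form, which now reads $\sum_{k=0}^{n}\qbinom{n}{k}_{q}\p_{k,q}^{(\alpha)}(x)\p_{n-k,q}^{(-\alpha)}(y;u)$. This is precisely the stated identity.

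There is essentially no obstacle here, since the corollary is a one-line specialization of Theorem \ref{theo_iden9}. The only point worth verifying is that the determining function $(\A_{q}(t))^{-\alpha}$ is well defined as a formal power series, which follows from the hypothesis $\A_{q}(0)=a_{0}\neq0$ appearing in the definition; this guarantees that $(\A_{q}(t))^{\alpha}(\A_{q}(t))^{-\alpha}=1$ in the ring of formal series and hence that the product of the two generating functions on the right-hand side of Theorem \ref{theo_iden9} collapses to $\e_{q}(tx)\e_{q}(ty,u)$, which is exactly the generating function of $\rr_{n}(x,y;u|q)$.
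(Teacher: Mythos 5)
Your proof is correct and is exactly the intended derivation: the corollary is the specialization $\beta=-\alpha$ of Theorem~\ref{theo_iden9} combined with the identification $\p_{n,q}^{(0)}(x,y;u)=\rr_{n}(x,y;u|q)$ noted after the definition of the bivariate polynomials. Your added remark on the invertibility of $(\A_{q}(t))^{\alpha}$ as a formal power series (via $a_{0}^{(\alpha)}\neq0$) is a sensible precaution that the paper leaves implicit.
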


\subsection{Characterizations}

\begin{theorem}\label{theo-charac}
Let $\{\p_{n,q}^{(\alpha)}(x;u)\}_{n}^{\infty}$ is a sequence of polynomials. Then the following are equivalent:
\begin{enumerate}
    \item $\{\p_{n,q}(x;u)\}_{n=0}^{\infty}$ is a sequence of deformed $q$-Appell polynomials.
    \item There exists a sequence $(a_{k}^{(\alpha)})_{k\geq0}$, independent of $n$, with $a_{0}^{(\alpha)}\neq0$ and such that
    \begin{equation*}
        \p_{n,q}^{(\alpha)}(x;u)=\sum_{k=0}^{n}\qbinom{n}{k}_{q}u^{\binom{n-k}{2}}a_{k}^{(\alpha)}x^{n-k}.
    \end{equation*}
    \item $\{\p_{n,q}(x;u)\}_{n=0}^{\infty}$ can be defined by means of following generating function 
    \begin{equation*}
        (\A_{q}(t))^{\alpha}\e_{q}(tx)=\sum_{n=0}^{\infty}\p_{n,q}^{(\alpha)}(x;u)\frac{t^n}{[n]_{q}!},
    \end{equation*}
    where 
    \begin{equation*}
        \A_{q}^{\alpha}(t)=\sum_{n=0}^{\infty}a_{n}^{(\alpha)}\frac{t^n}{[n]_{q}!},\ a_{0}^{(\alpha)}\neq0,\ \A_{q}(0)\neq0.
    \end{equation*}
    \item There exists a sequence $(a_{k}^{(\alpha)})_{k\geq0}$, independent of $n$ with $a_{0}\neq0$ and such that
    \begin{equation*}
        \p_{n,q}(x;u)=\left(\sum_{k=0}^{\infty}u^{\binom{n-k}{2}}a_{k}^{(\alpha)}\frac{D_{q}^{k}}{[k]_{q}!}\right)x^n.
    \end{equation*}
\end{enumerate}
\end{theorem}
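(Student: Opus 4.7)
The plan is to close the cycle of implications $(1) \Rightarrow (3) \Rightarrow (2) \Rightarrow (4) \Rightarrow (1)$. The implication $(1) \Leftrightarrow (3)$ is essentially Definition 1, provided we read the $\e_q(tx)$ in condition (3) as $\e_q(tx,u)$ (matching the left-hand side of Eq.~(\ref{eqn_def_uqApp1})); since the determining function $(\A_q(t))^\alpha$ and its coefficients $a_n^{(\alpha)}$ are the same in both, the two statements express exactly the same generating-function relation.

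For $(3) \Rightarrow (2)$, I would simply invoke the computation carried out in Theorem~\ref{theo_iden1}: expand $(\A_q(t))^\alpha = \sum_n a_n^{(\alpha)} t^n/[n]_q!$ together with $\e_q(tx,u) = \sum_n u^{\binom{n}{2}} x^n t^n/[n]_q!$, multiply by the standard $q$-Cauchy product, and read off the coefficient of $t^n/[n]_q!$ to obtain
\[
\p_{n,q}^{(\alpha)}(x;u)=\sum_{k=0}^{n}\qbinom{n}{k}_{q}u^{\binom{n-k}{2}}a_{k}^{(\alpha)}x^{n-k}.
\]
Conversely, to establish $(2) \Rightarrow (3)$, I would reverse this argument: form $\sum_n \p_{n,q}^{(\alpha)}(x;u)\, t^n/[n]_q!$ using the formula in (2), recognise the double sum as the $q$-Cauchy product of $\sum_n a_n^{(\alpha)} t^n/[n]_q!$ with $\sum_n u^{\binom{n}{2}} x^n t^n/[n]_q!$, and note that the condition $a_0^{(\alpha)}\neq 0$ ensures $\A_q(0)\neq 0$.

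The equivalence $(2) \Leftrightarrow (4)$ is a direct operator computation using $D_q^k x^n = \frac{[n]_q!}{[n-k]_q!} x^{n-k}$ (with $D_q^k x^n = 0$ for $k>n$). Applying this to the operator in (4) gives
\[
\left(\sum_{k=0}^{\infty}u^{\binom{n-k}{2}}a_{k}^{(\alpha)}\frac{D_q^k}{[k]_q!}\right)x^{n}
=\sum_{k=0}^{n}u^{\binom{n-k}{2}}a_{k}^{(\alpha)}\qbinom{n}{k}_{q}x^{n-k},
\]
which is exactly the right-hand side of (2); the truncation at $k=n$ makes the infinite sum in (4) well-defined as an operator on polynomials. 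Reading the identity in reverse supplies $(2)\Rightarrow(4)$.

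I do not anticipate any substantive obstacle; the argument is bookkeeping. The only subtlety is to keep the $u$-weights $u^{\binom{n-k}{2}}$ straight (they come from the $u^{\binom{n}{2}}$ in $\e_q(tx,u)$ via the $q$-Cauchy product, and they are the reason the operator in (4) depends on $n$ through the exponent $\binom{n-k}{2}$) and to confirm that the nonvanishing hypothesis $a_0^{(\alpha)}\neq 0$ is preserved in each direction so that $(\A_q(t))^\alpha$ is genuinely invertible as a formal power series.
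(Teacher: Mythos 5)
The weak point of your proposal is the treatment of condition (1). You read ``$\{\p_{n,q}(x;u)\}$ is a sequence of deformed $q$-Appell polynomials'' as being synonymous with the generating-function definition, so that $(1)\Leftrightarrow(3)$ becomes a tautology and the whole theorem reduces to the Cauchy-product bookkeeping in $(2)\Leftrightarrow(3)\Leftrightarrow(4)$. But the paper's own proof makes clear that (1) is meant in the classical Appell sense: $\p_{n,q}^{(\alpha)}(x;u)$ is a polynomial of exact degree $n$ satisfying the $q$-derivative recurrence $D_{q}\p_{n,q}(x;u)=[n]_{q}\p_{n-1,q}(ux;u)$. With that reading, the substantive content of the theorem is precisely the equivalence between this differential property and the existence of an $n$-independent coefficient sequence $(a_{k}^{(\alpha)})$, and your proposal never addresses it. The paper proves $(1)\Rightarrow(2)$ by writing $\p_{n}(x;u)=\sum_{k}\qbinom{n}{k}_{q}a_{n,k}^{(\alpha)}u^{\binom{n-k}{2}}x^{n-k}$ with a priori $n$-dependent coefficients, applying $D_{q}$ to get $\p_{n-1}(ux;u)$ expressed through the $a_{n,k}^{(\alpha)}$, shifting $n\to n+1$ and substituting $x\to u^{-1}x$, and comparing to conclude $a_{n+1,k}=a_{n,k}$; and it proves $(3)\Rightarrow(1)$ by applying $D_{q,x}$ to the generating function to produce $t\A_{q}(t)\e_{q}(uxt,u)$ and reading off $D_{q}\p_{n,q}(x;u)=[n]_{q}\p_{n-1,q}(ux;u)$. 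Neither of these steps appears in your argument, so the cycle you propose does not actually close through (1).

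Your remaining steps are fine and match the paper: $(2)\Leftrightarrow(3)$ is the $q$-Cauchy product exactly as in Theorem~\ref{theo_iden1}, and $(2)\Leftrightarrow(4)$ follows from $\frac{D_{q}^{k}}{[k]_{q}!}x^{n}=\qbinom{n}{k}_{q}x^{n-k}$ with truncation at $k=n$, which is all the paper says about it. Your observation that the $\e_{q}(tx)$ in condition (3) should be $\e_{q}(tx,u)$ is a correct reading of a typo in the statement. To repair the proof, keep what you have and add the two missing implications: derive the $n$-independence of the coefficients from the recurrence $D_{q}\p_{n,q}(x;u)=[n]_{q}\p_{n-1,q}(ux;u)$, and conversely derive that recurrence from the generating function by differentiating under the sum.
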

\begin{proof}
(1)$\Rightarrow$(2). Suppose that $\p_{n}(x;u)$ is a $u$-deformed $q$-Appell polynomial such that
\begin{equation}\label{eqn_pn}
    \p_{n}(x;u)=\sum_{k=0}^{n}\qbinom{n}{k}_{q}a^{(\alpha)}_{n,k}u^{\binom{n-k}{2}}x^{n-k},\ \ n=1,2,3,\ldots,
\end{equation}
where the coefficients $a^{(\alpha)}_{n,k}$ depend on $n$ and $k$ and $a_{n,0}^{(\alpha)}\neq0$. By applying the operator $D_{q}$ to each member of Eq.(\ref{eqn_pn}) we have
\begin{equation}\label{eqn-der-pn}
    \p_{n-1}(ux;u)=\sum_{k=0}^{n-1}\qbinom{n-1}{k}_{q}a^{(\alpha)}_{n,k}u^{\binom{n-1-k}{2}}(ux)^{n-k-1},\ \ n=1,2,3,\ldots,
\end{equation}
Shifting the index $n\rightarrow n+1$ in Eq.(\ref{eqn-der-pn}) and making the substitution $x\rightarrow u^{-1}x$, we get
\begin{equation}\label{eqn_pn2}
    \p_{n}(x;u)=\sum_{k=0}^{n}\qbinom{n}{k}_{q}a_{n+1,k}u^{\binom{n-k}{2}}x^{n-k},\ \ n=1,2,3,\ldots,
\end{equation}
Comparing Eq.(\ref{eqn_pn}) and Eq.(\ref{eqn_pn2}), we have $a_{n+1,k}=a_{n,k}$ for all $k$ and $n$, and therefore $a_{n+1,k}=a_{k}$ is independent of $n$. \\
(2)$\Rightarrow$(3). From (2) we have
\begin{align*}
    \sum_{n=0}^{\infty}\p_{n,q}(x;u)\frac{t^n}{[n]_{q}!}&=\sum_{n=0}^{\infty}\left(\sum_{k=0}^{n}\qbinom{n}{k}_{q}u^{\binom{n-k}{2}}a_{k}^{(\alpha)}x^{n-k}\right)\frac{t^n}{[n]_{q}!}\\
    &=\left(\sum_{n=0}^{\infty}a_{n}^{(\alpha)}\frac{t^n}{[n]_{q}!}\right)\left(\sum_{n=0}^{\infty}u^{\binom{n}{2}}\frac{(xt)^n}{[n]_{q}!}\right)\\
    &=\A_{q}(t)\e_{q}(xt,u).
\end{align*}
(3)$\Rightarrow$(1). Assume that $\{\p_{n,q}(x;u)\}$ is generated by
\begin{equation*}
    \A_{q}(t)\e_{q}(xt,u)=\sum_{n=0}^{\infty}\p_{n,q}(x;u)\frac{t^n}{[n]_{q}!}.
\end{equation*}
On the one side, applying the operator $D_{q,x}$, with respect to the variable $x$, to each side of this equation, we get
\begin{equation*}
    t\A_{q}(t)\e_{q}(uxt,u)=\sum_{n=0}^{\infty}D_{q,x}\p_{n,q}(x;u)\frac{t^{n}}{[n]_{q}!}.
\end{equation*}
On the other hand, 
\begin{align*}
    t\A_{q}(t)\e_{q}(uxt,u)&=\sum_{n=0}^{\infty}\p_{n,q}(ux;u)\frac{t^{n+1}}{[n]_{q}!}\\
    &=\sum_{n=0}^{\infty}[n]_{q}\p_{n-1,q}(ux;u)\frac{t^{n}}{[n]_{q}!}.
\end{align*}
By comparing the coefficients of $t^n$, we obtain (1). (2)$\Longleftrightarrow$(4) its obvious since $D_{q}^{k}x^{n}=0$ for $k>n$. This ends the proof of the theorem.
\end{proof}

\subsection{Algebraic structure}

Let $\{f_{n}(x)\}_{n=0}^{\infty}$ be a given polynomial set, and we denote this by a single symbol $f$ and refer to $f_{n}(x)$ as the $n$-th component of $f$. As was done in [?,?], we define on the set $\mathcal{P}$ of all polynomial sequences the following three operations $+,\cdot$ and $*$. The first one is given by the rule that $f+g$ is the polynomial sequence whose $n$-th component is $f_{n}(x)+g_{n}(x)$ provided that the degree of $f_{n}(x)+g_{n}(x)$ is exactly $n$. On the other hand, if $f$ and $g$ are the sets whose $n$-th components are, restively,
\begin{equation*}
    f_{n}(x)=\sum_{k=0}^{n}f(n,k)x^{k},\ \ \ g_{n}(x)=\sum_{k=0}^{n}g(n,k)x^k,
\end{equation*}
then $f*g$ is the polynomial set whose $n$-th component is
\begin{equation*}
    (f*g)_{n}(x)=\sum_{k=0}^{n}f(n,k)g_{k}(x).
\end{equation*}
If $\alpha$ is a real or complex number, then $\alpha f$ is the polynomial set whose $n$-th component is $\alpha f_{n}(x)$. We obviously have
\begin{align*}
    f+g&=g+f\ \text{ for all }f,g\in\mathcal{P},\\
    (\alpha f*g)&=(f*\alpha g)=\alpha(f*g).
\end{align*}
We denote the class of all $u$-deformed $q$-Appell sets by $\mathfrak{A}(q;u)$. In $\mathfrak{A}(q;u)$ the identity element with respect $*$ is the $u$-deformed $q$-Appell sets $I=\{x^n\}$. Note that $I$ has the determining function $\A_{q}(t,u)=1$. We have the following theorem.
\begin{theorem}
Let $f,g,h\in\mathfrak{A}(q;u)$ with the determining functions $\A_{q}(t)$, $\mathcal{B}_{q}(t)$ and $\mathcal{C}_{q}(t)$, respectively. Then 
\begin{enumerate}
    \item $f+g\in\mathfrak{A}(q;u)$ if $\A_{q}(0)+\mathcal{B}_{q}(0)\neq0$.
    \item $f+g$ belongs to the determining function $\A_{q}(t)+\mathcal{B}_{q}(t)$.
    \item $f+(g+h)=(f+g)+h$. 
\end{enumerate}
\end{theorem}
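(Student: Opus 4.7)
My approach is to translate every statement into an identity for the determining functions via the generating-function characterization given in Theorem \ref{theo-charac}. Since $f,g\in\mathfrak{A}(q;u)$, I would start by writing
\begin{equation*}
\A_{q}(t)\e_{q}(tx,u)=\sum_{n=0}^{\infty}f_{n}(x)\frac{t^{n}}{[n]_{q}!},\qquad \mathcal{B}_{q}(t)\e_{q}(tx,u)=\sum_{n=0}^{\infty}g_{n}(x)\frac{t^{n}}{[n]_{q}!},
\end{equation*}
and add the two identities termwise to obtain
\begin{equation*}
(\A_{q}(t)+\mathcal{B}_{q}(t))\e_{q}(tx,u)=\sum_{n=0}^{\infty}(f_{n}(x)+g_{n}(x))\frac{t^{n}}{[n]_{q}!}.
\end{equation*}
This realizes $f+g$ as a sequence generated by the determining function $\A_{q}(t)+\mathcal{B}_{q}(t)$, which immediately yields statement (2).

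For statement (1) I must verify the two conditions that membership in $\mathfrak{A}(q;u)$ requires: that the determining function has nonzero constant term, and that each $n$-th component has exact degree $n$. The first is precisely the hypothesis $\A_{q}(0)+\mathcal{B}_{q}(0)\neq 0$. For the second I would invoke Theorem \ref{theo_iden1} to read off the leading coefficient of $f_{n}(x)+g_{n}(x)$ as $u^{\binom{n}{2}}(\A_{q}(0)+\mathcal{B}_{q}(0))$, which is nonzero under the same hypothesis (with $u\neq 0$ tacitly assumed throughout this section).

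For statement (3), associativity is immediate from (2) once one notes that the determining function of either bracketing is $\A_{q}(t)+\mathcal{B}_{q}(t)+\mathcal{C}_{q}(t)$ and that formal power series are associative under addition. Equivalently, one checks directly that the $n$-th component of both $f+(g+h)$ and $(f+g)+h$ equals $f_{n}(x)+g_{n}(x)+h_{n}(x)$.

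There is no genuine obstacle in this argument: the only subtle point is the degree condition that the operation $+$ on $\mathcal{P}$ demands, and the hypothesis in (1) is tailored exactly to prevent cancellation of the leading terms. The rest is a routine back-and-forth between polynomial sequences and their generating series.
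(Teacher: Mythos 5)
Your argument is correct. Note, however, that the paper states this theorem without proof (its phrase ``The proof of the following result is given'' refers to the subsequent theorem on $*$), so there is no in-paper argument to compare against; your generating-function route is the natural one and exactly mirrors the technique the paper does use for Theorem \ref{theo_prod_qappel}. Your attention to the degree condition built into the definition of $+$ on $\mathcal{P}$ --- reading the leading coefficient $u^{\binom{n}{2}}\bigl(\A_{q}(0)+\mathcal{B}_{q}(0)\bigr)$ off Theorem \ref{theo_iden1} and flagging the tacit hypothesis $u\neq 0$ --- supplies precisely the detail the paper leaves implicit, and is the only point in the statement that requires any care.
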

The proof of the following result is given.
\begin{theorem}\label{theo_prod_qappel}
If $f,g,h\in\mathfrak{A}(q;u)$ with determining functions $\A_{q}(t)$, $\mathcal{B}_{q}(t)$ and $\mathcal{C}_{q}(t)$, respectively, then 
\begin{enumerate}
    \item $f*g\in\mathfrak{A}(q;u)$.
    \item $f*g=g*f$.
    \item $f*g$ belongs to determining function $\A_{q}(t)\mathcal{B}_{q}(t)$.
    \item $f*(g*h)=(f*g)*h$.
\end{enumerate}
\end{theorem}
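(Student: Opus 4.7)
The plan is to reduce all four claims to the single identity that the $*$ operation corresponds to ordinary multiplication of determining functions, that is, to statement (3). Once (3) is established, (1) follows because $\A_{q}(t)\mathcal{B}_{q}(t)$ again has nonzero constant term $\A_{q}(0)\mathcal{B}_{q}(0)$, while (2) and (4) are immediate from the commutativity and associativity of multiplication in the ring of formal power series. Thus the real work is in (3).

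To prove (3), I would use Theorem~\ref{theo_iden1} to extract the triangular coefficient arrays. Re-indexing $k\mapsto n-k$ in the formula $f_{n}(x)=\sum_{k}\qbinom{n}{k}_{q}u^{\binom{n-k}{2}}a_{k}x^{n-k}$ gives $f(n,k)=\qbinom{n}{k}_{q}u^{\binom{k}{2}}a_{n-k}$, and similarly $g(n,k)=\qbinom{n}{k}_{q}u^{\binom{k}{2}}b_{n-k}$, where $a_{r}, b_{r}$ are the Taylor coefficients of $\A_{q}(t), \mathcal{B}_{q}(t)$. Substituting into the definition of $*$ and expanding $g_{k}(x)=\sum_{j}\qbinom{k}{j}_{q}u^{\binom{j}{2}}b_{k-j}x^{j}$ yields a double sum. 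Swapping the order of summation to pull $x^{j}$ outside, applying the identity $\qbinom{n}{k}_{q}\qbinom{k}{j}_{q}=\qbinom{n}{j}_{q}\qbinom{n-j}{k-j}_{q}$, and putting $m=k-j$ presents the coefficient of $x^{j}$ as $\qbinom{n}{j}_{q}$ times an inner convolution of $(a_{r})$ and $(b_{s})$ against $\qbinom{n-j}{m}_{q}$ with various $u$-weights. The target is to recognize this as $\qbinom{n}{j}_{q}u^{\binom{j}{2}}c_{n-j}$, where $c_{r}=\sum_{m}\qbinom{r}{m}_{q}a_{r-m}b_{m}$ are precisely the Taylor coefficients of $\A_{q}(t)\mathcal{B}_{q}(t)$, thereby exhibiting $(f*g)_{n}(x)$ as the $n$-th $u$-deformed $q$-Appell polynomial with determining function $\A_{q}(t)\mathcal{B}_{q}(t)$.

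The main obstacle is the bookkeeping of the $u$-deformation. One must split $u^{\binom{k}{2}}$ via the identity $\binom{k}{2}=\binom{j}{2}+\binom{k-j}{2}+j(k-j)$ recorded in the introduction, and then combine it with the $u^{\binom{j}{2}}$ inherited from the expansion of $g_{k}(x)$, so that exactly one factor of $u^{\binom{j}{2}}$ survives outside and the remaining $u$-powers inside the $m$-sum disappear into a $u$-independent convolution. This collapse of $u$-exponents is the delicate step; all the other manipulations are standard $q$-binomial identities. With (3) in hand, the assignment $f\mapsto \A_{q}(t)$ embeds $\mathfrak{A}(q;u)$ into the multiplicative monoid of formal power series with nonzero constant term, from which closure, commutativity, and associativity of $*$, i.e.\ items (1), (2), and (4), all follow at once.
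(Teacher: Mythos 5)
Your high-level strategy coincides with the paper's: the author likewise reduces everything to the generating-function identity and dismisses (2) and (4) as following "directly" from the multiplicative structure of formal power series. The divergence — and the genuine gap — is at the step you yourself single out as delicate: the collapse of the $u$-exponents does not occur. Carrying your plan out with $f(n,j)=\qbinom{n}{j}_{q}u^{\binom{j}{2}}a_{n-j}$ and $g_{k}(x)=\sum_{j}\qbinom{k}{j}_{q}u^{\binom{j}{2}}b_{k-j}x^{j}$, and using $\qbinom{n}{k}_{q}\qbinom{k}{j}_{q}=\qbinom{n}{j}_{q}\qbinom{n-j}{k-j}_{q}$ together with $\binom{m+j}{2}=\binom{m}{2}+\binom{j}{2}+mj$, the coefficient of $x^{j}$ in $(f*g)_{n}(x)$ is
\begin{equation*}
\qbinom{n}{j}_{q}\,u^{2\binom{j}{2}}\sum_{m=0}^{n-j}\qbinom{n-j}{m}_{q}u^{\binom{m}{2}+mj}\,a_{n-j-m}\,b_{m}.
\end{equation*}
The outer power is $u^{2\binom{j}{2}}$ rather than $u^{\binom{j}{2}}$, and the inner sum depends on $j$ separately (through $u^{mj}$), not only on $n-j$: for $n-j=1$ it equals $a_{1}b_{0}+u^{j}a_{0}b_{1}$, which varies with $j$ unless $u=1$ or $b_{1}=0$. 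So $(f*g)_{n}$ is not of the form $\sum_{j}\qbinom{n}{j}_{q}u^{\binom{j}{2}}c_{n-j}x^{j}$ required by Theorem~\ref{theo-charac}(2), and your $c_{r}=\sum_{m}\qbinom{r}{m}_{q}a_{r-m}b_{m}$ cannot be extracted. Your computation, done honestly, refutes rather than proves statement (3) under the stated definition of $*$ (it does prove it in the undeformed case $u=1$, which is Al-Salam's setting).

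The paper's proof evades this by using a different coefficient array: it writes $(f*g)_{n}(x;u)=\sum_{k}\qbinom{n}{k}_{q}a_{n-k}^{(\alpha)}\mathrm{Q}_{k,q}^{(\beta)}(x;u)$, i.e.\ it takes $f(n,k)=\qbinom{n}{k}_{q}a_{n-k}^{(\alpha)}$ with no $u$-power — the coefficients of the undeformed polynomial $\p_{n,q}^{(\alpha)}(x)$ rather than of $\p_{n,q}^{(\alpha)}(x;u)$ — after which the sum is a plain $q$-Cauchy product and the generating function $\A_{q}(t)\mathcal{B}_{q}(t)\e_{q}(xt,u)$ drops out at once. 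That move is not consistent with the paper's own definition of $*$ (where $f(n,k)$ is the literal coefficient of $x^{k}$ in $f_{n}(x)$), but it is what makes the paper's computation go through. To salvage your argument you would have to adopt the same redefinition of the array entering the umbral composition, not push harder on $q$-binomial identities; as written, the one identity your entire reduction rests on is exactly the one that fails.
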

\begin{proof}
It is enough to prove the first part of the theorem. The rest follows directly. From Theorem \ref{theo-charac}, we may put
\begin{equation*}
    \p_{n,q}^{(\alpha)}(x;u)=\sum_{k=0}^{n}\qbinom{n}{k}_{q}u^{\binom{k}{2}}a_{k}^{(\alpha)}x^{n-k}=\sum_{k=0}^{n}\qbinom{n}{k}_{q}u^{\binom{n-k}{2}}a_{n-k}^{(\alpha)}x^k
\end{equation*}
so that 
\begin{equation*}
    \A_{q}^{\alpha}(t)=\sum_{n=0}^{\infty}a_{n}^{(\alpha)}\frac{t^n}{[n]_{q}!}.
\end{equation*}
Hence
\begin{align*}
    \sum_{n=0}^{\infty}(f*g)_{n}(x;u)\frac{t^n}{[n]_{q}!}&=\sum_{n=0}^{\infty}\left(\sum_{k=0}^{n}\qbinom{n}{k}_{q}a_{n-k}^{(\alpha)}\mathrm{Q}_{k,q}^{(\beta)}(x;u)\right)\frac{t^n}{[n]_{q}!}\\
    &=\left(\sum_{n=0}^{\infty}a^{(\alpha)}_{n}\frac{t^n}{[n]_{q}!}\right)\left(\sum_{n=0}^{\infty}\mathrm{Q}^{(\beta)}_{n,q}(x;u)\frac{t^n}{[n]_{q}!}\right)\\
    &=\A_{q}^{\alpha}(t)\mathcal{B}^{\beta}_{q}(t)\e_{q}(xt,u).
\end{align*}
This ends the proof of the theorem.
\end{proof}
\begin{corollary}
Let $f\in\mathfrak{A}(q;u)$, then $f$ has an inverse with respect to $*$, i.e. there is a set $g\in\mathfrak{A}(q;u)$ such that
\begin{equation*}
    f*g=I.
\end{equation*}
\end{corollary}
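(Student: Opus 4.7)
The plan is to realize the inverse purely at the level of determining functions and then invoke Theorem \ref{theo_prod_qappel}(3), which says that the $*$-product corresponds to multiplication of determining functions. Since the identity $I=\{x^n\}$ has determining function $\A_{q}(t,u)=1$, finding $g$ with $f*g=I$ amounts to inverting $\A_{q}(t)$ as a formal power series in $t$.

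First I would set, for the given $f\in\mathfrak{A}(q;u)$,
\begin{equation*}
    \A_{q}(t)=\sum_{n=0}^{\infty}a_{n}^{(\alpha)}\frac{t^{n}}{[n]_{q}!},\qquad a_{0}^{(\alpha)}\neq0,
\end{equation*}
and define a formal power series $\mathcal{B}_{q}(t)=\sum_{n=0}^{\infty}b_{n}\dfrac{t^{n}}{[n]_{q}!}$ by the requirement $\A_{q}(t)\mathcal{B}_{q}(t)=1$. Because $a_{0}^{(\alpha)}\neq0$, this inverse exists in $\C[[t]]$ and its coefficients $b_{n}$ are determined recursively from the Cauchy-type convolution
\begin{equation*}
    \sum_{k=0}^{n}\qbinom{n}{k}_{q}a_{k}^{(\alpha)}b_{n-k}=\delta_{n,0},
\end{equation*}
with $b_{0}=1/a_{0}^{(\alpha)}\neq0$, so in particular $\mathcal{B}_{q}(0)\neq0$.

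Next I would use the characterization (3) in Theorem \ref{theo-charac} to attach to $\mathcal{B}_{q}(t)$ a $u$-deformed $q$-Appell sequence $g=\{\q_{n,q}(x;u)\}_{n=0}^{\infty}$ by
\begin{equation*}
    \mathcal{B}_{q}(t)\e_{q}(tx,u)=\sum_{n=0}^{\infty}\q_{n,q}(x;u)\frac{t^{n}}{[n]_{q}!},
\end{equation*}
so that $g\in\mathfrak{A}(q;u)$ with determining function $\mathcal{B}_{q}(t)$. Finally, by Theorem \ref{theo_prod_qappel} the set $f*g$ belongs to $\mathfrak{A}(q;u)$ with determining function $\A_{q}(t)\mathcal{B}_{q}(t)=1$, and since the only $u$-deformed $q$-Appell set with determining function $1$ is $I=\{x^n\}$ (read off from Theorem \ref{theo-charac}(2) with $a_{0}=1$ and $a_{k}=0$ for $k\geq1$, which yields $\p_{n,q}(x;u)=u^{\binom{0}{2}}x^{n}=x^{n}$), we conclude $f*g=I$.

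There is no real obstacle here: the only point requiring a moment of care is verifying that the formal inverse $\mathcal{B}_{q}(t)$ qualifies as a determining function, which reduces to $\mathcal{B}_{q}(0)=1/\A_{q}(0)\neq0$. Everything else is a direct application of Theorem \ref{theo-charac} together with Theorem \ref{theo_prod_qappel}(3).
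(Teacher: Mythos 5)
Your argument is correct and is essentially the paper's own proof: the paper disposes of this corollary in one line by taking $g$ to be the set with determining function $\A_{q}(t)^{-1}$, which exists as a formal power series because $\A_{q}(0)\neq0$, exactly as you spell out via Theorem \ref{theo-charac}(3) and Theorem \ref{theo_prod_qappel}(3). One small slip in your final identification: with $a_{0}=1$ and $a_{k}=0$ for $k\geq1$, Theorem \ref{theo-charac}(2) gives $\p_{n,q}(x;u)=u^{\binom{n}{2}}x^{n}$ rather than $u^{\binom{0}{2}}x^{n}=x^{n}$, so equating the set with determining function $1$ to $I=\{x^{n}\}$ is only literal when $u=1$ --- but this imprecision is inherited from the paper itself, which declares that $I=\{x^{n}\}$ has determining function $1$.
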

Indeed, $g$ belongs to the determining function $\A_{q}(t)^{-1}$ where $\A_{q}(t)$ is the determining function of $f$. We shall denote $g$ by $f^{-1}$. Theorem \ref{theo_prod_qappel} and its corollary allow us to define $f^0=I$, $f^n=f*f^{n-1}$, where $n$ is a non-negative, and $f^{-n}=f^{-1}*f^{-n+1}$. Therefore, with the above, we have proven that the system $\mathfrak{A}(q;u)$ is a commutative group. In particular, this leads to the fact that if 
\begin{equation*}
    f*g=h
\end{equation*}
and if any two of the elements $f,g,h$ are $u$-deformed $q$-Appell then then third is also $q$-Appell. 

\section{Deformed $q$-Appell operators}

\begin{definition}
We define the following $q$-Appell operators: the deformed $q$-Appell operator
    \begin{align}
        \Aa_{\alpha}(yD_{q}|u)&=\sum_{k=0}^{\infty}u^{\binom{k}{2}}a_{k}^{(\alpha)}\frac{y^{k}}{[k]_{q}!}D_{q}^{k}.
    \end{align}
and the deformed bivariate $q$-Appell operator
\begin{equation}
    \Aa_{\alpha}(x,y;D_{q}|u)=\sum_{k=0}^{\infty}\p_{k,q}^{(\alpha)}(x;u)\frac{y^{k}}{[k]_{q}!}D_{q}^{k}.
\end{equation}
\end{definition}

\begin{definition}
The $u$-deformed homogeneous quasi-$q$-Appell polynomials of order $\alpha$ are defined by
    \begin{equation}\label{eqn_dhq_appell}
        \q_{n,q}^{(\alpha)}(x,y;u)=\sum_{k=0}^{n}\qbinom{n}{k}_{q}u^{\binom{k}{2}}a_{k}^{(\alpha)}y^{k}x^{n-k}.
    \end{equation}
The $u$-deformed trivariate quasi-$q$-Appell polynomials of order $\alpha$ are defined by
    \begin{equation}\label{eqn_dtq_appell}
        \q_{n,q}^{(\alpha)}(x,y,z;u)=\sum_{k=0}^{n}\qbinom{n}{k}_{q}\p_{k,q}^{(\alpha)}(x;u)y^{k}z^{n-k}.
    \end{equation}
\end{definition}
The $q$-derivatives of polynomials in Eqs. (\ref{eqn_dhq_appell}) and \ref{eqn_dtq_appell} are, respectively
\begin{align*}
    D_{q,x}\q_{n,q}^{(\alpha)}(x,y;u)&=[n]_{q}\q_{n-1,q}^{(\alpha)}(x,y;u),\\
    D_{q,y}\q_{n,q}^{(\alpha)}(x,y;u)&=[n]_{q}\sum_{k=0}^{n-1}\qbinom{n-1}{k}_{q}u^{\binom{k}{2}}a_{k+1}^{(\alpha)}(uy)^{k}x^{n-1-k}
\end{align*}
and
\begin{align*}
    D_{q,x}\q_{n,q}^{(\alpha)}(x,y,z;u)&=[n]_{q}y\q_{n-1,q}^{(\alpha)}(x,y,z;u),\\
    D_{q,y}\q_{n,q}^{(\alpha)}(x,y,z;u)&=[n]_{q}\sum_{k=0}^{n-1}\qbinom{n-1}{k}_{q}\p_{k+1,q}^{(\alpha)}(x;u)y^{k}z^{n-1-k},\\
    D_{q,z}\q_{n,q}^{(\alpha)}(x,y,z;u)&=[n]_{q}\q_{n-1,q}^{(\alpha)}(x,y,z;u).
\end{align*}
Then, the polynomials $\q_{n,q}^{(\alpha)}(x,y;u)$ and $\q_{n,q}^{(\alpha)}(x,y,z;u)$ are not deformed $q$-Appell polynomials. The quasi-$q$-Appell and $q$-Appell polynomials are relating in the following way
\begin{align}
    \p_{n,q}^{(\alpha)}(x;u)&=u^{\binom{n}{2}}\q_{n,q}^{(\alpha)}(x,u^{1-2n};u),\\
    \p_{n,q}^{(\alpha)}(x,y;u)&=\q_{n,q}^{(\alpha)}(x,1,y;u).
\end{align}

\begin{theorem}
    \begin{equation}
        \q_{n,q}^{(\alpha)}(x,y,z;u)=\Aa_{\alpha}(x,y,D_{q}|u)\{z^n\}.
    \end{equation}
\end{theorem}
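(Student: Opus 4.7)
The plan is to carry out a direct unwinding of the definitions, relying on the fact that $D_q$ annihilates any monomial $z^n$ after being applied more than $n$ times. First I would substitute the defining expansion of $\Aa_{\alpha}(x,y,D_q|u)$ into $\Aa_{\alpha}(x,y,D_q|u)\{z^n\}$, producing the series
\begin{equation*}
\sum_{k=0}^{\infty}\p_{k,q}^{(\alpha)}(x;u)\frac{y^{k}}{[k]_{q}!}D_{q}^{k}\{z^{n}\}.
\end{equation*}
The key fact from the introduction is $D_{q}^{k}z^{n}=\frac{(q;q)_{n}}{(q;q)_{n-k}}z^{n-k}=\frac{[n]_{q}!}{[n-k]_{q}!}z^{n-k}$ for $k\leq n$, and $D_{q}^{k}z^{n}=0$ for $k>n$. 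Inserting this truncates the infinite series at $k=n$.

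Next I would simplify the resulting coefficient by combining the factorials: the product $\frac{1}{[k]_{q}!}\cdot\frac{[n]_{q}!}{[n-k]_{q}!}$ is exactly the $q$-binomial coefficient $\qbinom{n}{k}_{q}$. This turns the expression into
\begin{equation*}
\sum_{k=0}^{n}\qbinom{n}{k}_{q}\p_{k,q}^{(\alpha)}(x;u)\,y^{k}\,z^{n-k},
\end{equation*}
which matches the definition \eqref{eqn_dtq_appell} of $\q_{n,q}^{(\alpha)}(x,y,z;u)$ verbatim, completing the proof.

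There is no real obstacle here: the statement is essentially an operational reformulation of the defining sum for $\q_{n,q}^{(\alpha)}(x,y,z;u)$. The only thing to verify carefully is the action of $D_{q}^{k}$ on $z^{n}$, which is already recorded in the preliminaries, and the reduction of the factorials to $\qbinom{n}{k}_{q}$. Thus the proof will be a compact two-line calculation with no induction or auxiliary identities required.
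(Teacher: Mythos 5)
Your proof is correct and takes essentially the same route as the paper's: expand the operator $\Aa_{\alpha}(x,y,D_{q}|u)$, let $D_{q}^{k}$ act on $z^{n}$ so the series truncates at $k=n$, and absorb the factorials into $\qbinom{n}{k}_{q}$ to recover the defining sum \eqref{eqn_dtq_appell}. (One small caution: the chain $\frac{(q;q)_{n}}{(q;q)_{n-k}}=\frac{[n]_{q}!}{[n-k]_{q}!}$ is off by a factor $(1-q)^{k}$ --- the preliminary formula $D_{q}^{n}x^{k}=\frac{(q;q)_{k}}{(q;q)_{k-n}}x^{k-n}$ is itself missing a $(1-q)^{-n}$ --- but your final value $D_{q}^{k}z^{n}=\frac{[n]_{q}!}{[n-k]_{q}!}z^{n-k}$ is the correct one, so the argument is unaffected.)
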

\begin{proof}
    \begin{align*}
        \Aa_{\alpha}(x,y,D_{q}|u)\{z^{n}\}&=\sum_{k=0}^{\infty}\p_{k,q}^{(\alpha)}(x;u)\frac{y^{k}}{[k]_{q}!}D_{q}^{k}\{z^n\}\\
        &=\sum_{k=0}^{n}\qbinom{n}{k}_{q}\p_{k,q}^{(\alpha)}(x;u)y^kz^{n-k}\\
        &=\q_{n,q}^{(\alpha)}(x,y,z;u).
    \end{align*}
\end{proof}

\begin{theorem}
    \begin{equation}
        \sum_{n=0}^{\infty}\q_{n,q}^{(\alpha)}(x,y,z;u)\frac{t^{n}}{[n]_{q}!}=\e_{q}(zt)\A_{q}^{\alpha}(yt)\e_{q}(xyts,u).
    \end{equation}
\end{theorem}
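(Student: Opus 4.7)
The plan is to start from the combinatorial definition (\ref{eqn_dtq_appell}) of $\q_{n,q}^{(\alpha)}(x,y,z;u)$, multiply by $t^n/[n]_q!$, and recognize the double sum as a $q$-Cauchy product of two familiar generating series. Concretely, I would write
\begin{equation*}
\sum_{n=0}^{\infty}\q_{n,q}^{(\alpha)}(x,y,z;u)\frac{t^{n}}{[n]_{q}!}=\sum_{n=0}^{\infty}\sum_{k=0}^{n}\qbinom{n}{k}_{q}\p_{k,q}^{(\alpha)}(x;u)\,y^{k}z^{n-k}\,\frac{t^{n}}{[n]_{q}!}
\end{equation*}
and then swap the order of summation by setting $m=n-k$, converting the sum into the product
\begin{equation*}
\left(\sum_{k=0}^{\infty}\p_{k,q}^{(\alpha)}(x;u)\frac{(yt)^{k}}{[k]_{q}!}\right)\left(\sum_{m=0}^{\infty}\frac{(zt)^{m}}{[m]_{q}!}\right),
\end{equation*}
which is the standard factoring of a $q$-convolution associated with $[n]_q!$ factorials.

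Next, I would apply the generating function (\ref{eqn_def_uqApp1}) defining $\p_{n,q}^{(\alpha)}(x;u)$ with the formal variable $t$ replaced by $yt$, obtaining
\begin{equation*}
\sum_{k=0}^{\infty}\p_{k,q}^{(\alpha)}(x;u)\frac{(yt)^{k}}{[k]_{q}!}=\A_{q}^{\alpha}(yt)\,\e_{q}(x\cdot yt,\,u),
\end{equation*}
while the second factor is, by definition, $\e_{q}(zt)$. Assembling the two pieces yields
\begin{equation*}
\sum_{n=0}^{\infty}\q_{n,q}^{(\alpha)}(x,y,z;u)\frac{t^{n}}{[n]_{q}!}=\e_{q}(zt)\,\A_{q}^{\alpha}(yt)\,\e_{q}(xyt,u),
\end{equation*}
which matches the claimed right-hand side (the extra symbol ``$s$'' in the statement appears to be a typographical slip: the intended argument is $xyt$).

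There is no real obstacle here since everything reduces to the $q$-Cauchy product rule $\sum_{n}\sum_{k=0}^{n}\qbinom{n}{k}_{q}a_{k}b_{n-k}\frac{t^{n}}{[n]_{q}!}=\bigl(\sum a_{k}\frac{t^{k}}{[k]_{q}!}\bigr)\bigl(\sum b_{m}\frac{t^{m}}{[m]_{q}!}\bigr)$ and to substituting $yt$ into the defining generating function for the univariate deformed $q$-Appell polynomials. The only subtlety worth flagging is keeping the $u$-dependence correct: the deformation factor $u^{\binom{\cdot}{2}}$ sits entirely inside $\e_{q}(\cdot,u)$ attached to the variable $x$, and since the $y^{k}$ and $z^{n-k}$ pieces come from the non-deformed $q$-exponential weighting on their respective indices, no additional $u$-weights are introduced by the convolution step.
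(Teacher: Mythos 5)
Your proof is correct, but it takes a different route from the paper's. The paper first invokes the preceding theorem, $\q_{n,q}^{(\alpha)}(x,y,z;u)=\Aa_{\alpha}(x,y,D_{q}|u)\{z^n\}$, pulls the operator $\Aa_{\alpha}(x,y,D_{q,z}|u)$ outside the sum over $n$ so that it acts on $\e_{q}(zt)$, and then uses $D_{q,z}^{k}\{\e_{q}(zt)\}=t^{k}\e_{q}(zt)$ to produce the factor $\e_{q}(zt)\sum_{k}\p_{k,q}^{(\alpha)}(x;u)(ty)^{k}/[k]_{q}!$ before recognizing the generating function. You instead bypass the operator entirely and work directly from the combinatorial definition (\ref{eqn_dtq_appell}), splitting the double sum via the $q$-Cauchy product identity $\qbinom{n}{k}_{q}/[n]_{q}!=1/([k]_{q}![n-k]_{q}!)$. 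The two arguments carry identical algebraic content, so yours is a legitimate and arguably more elementary proof of this particular identity; what the paper's operator formulation buys is reusability, since in the subsequent Mehler and Rogers formulas the same operator acts on a \emph{product} of functions of $z$ and the Leibniz rule (\ref{eqn_leibniz}) becomes essential, whereas the naive convolution no longer factors. You are also right that the ``$s$'' in $\e_{q}(xyts,u)$ is a typographical slip for $\e_{q}(xyt,u)$ (the same stray $s$ appears in the paper's own proof), and your remark that the $u$-weights stay confined to the $x$-factor is the correct bookkeeping.
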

\begin{proof}
   \begin{align*}
       \sum_{n=0}^{\infty}\q_{n,q}^{(\alpha)}(x,y,z;u)\frac{t^{n}}{[n]_{q}!}&=\sum_{n=0}^{\infty}\Aa_{\alpha}(x,y,D_{q,z}|u)\{z^{n}\}\frac{t^{n}}{[n]_{q}!}\\
       &=\Aa_{\alpha}(x,y,D_{q,z}|u)\left\{\sum_{n=0}^{\infty}\frac{(zt)^n}{[n]_{q}!}\right\}\\
       &=\Aa_{\alpha}(x,y,D_{q,z}|u)\left\{\e_{q}(zt)\right\}\\
       &=\sum_{k=0}^{\infty}\frac{\p_{k,q}^{(\alpha)}(x;u)y^{k}}{[k]_{q}!}D_{q,z}^{k}\{\e_{q}(zt)\}\\
       &=\e_{q}(zt)\sum_{k=0}^{\infty}\p_{k,q}^{(\alpha)}(x;u)\frac{(ty)^{k}}{[k]_{q}!}\\
       &=\e_{q}(zt)\A_{q}^{\alpha}(yt)\e_{q}(xyts,u).
   \end{align*} 
\end{proof}

\begin{theorem}
    \begin{equation}
        \sum_{n=0}^{\infty}q^{\binom{n}{2}}\q_{n,q}^{(\alpha)}(x,y,z;u)\frac{t^{n}}{[n]_{q}!}=\E_{q}(zt)\sum_{k=0}^{\infty}q^{\binom{k}{2}}\frac{\p_{k,q}^{(\alpha)}(x;u)}{(-(1-q)zt;q)_{k}[k]_{q}!}(ty)^{k}.
    \end{equation}
\end{theorem}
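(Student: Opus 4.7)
The plan is to mimic the proof of the preceding theorem, but apply the operator $\Aa_{\alpha}(x,y,D_{q,z}|u)$ to $\E_{q}(zt)$ rather than to $\e_{q}(zt)$. Concretely, I start from the operator identity $\q_{n,q}^{(\alpha)}(x,y,z;u)=\Aa_{\alpha}(x,y,D_{q,z}|u)\{z^n\}$ proved earlier. Multiplying by $q^{\binom{n}{2}}t^{n}/[n]_{q}!$, summing over $n$, and pulling the operator (which acts only in $z$) outside the sum gives
\[
\sum_{n=0}^{\infty}q^{\binom{n}{2}}\q_{n,q}^{(\alpha)}(x,y,z;u)\frac{t^{n}}{[n]_{q}!}=\Aa_{\alpha}(x,y,D_{q,z}|u)\bigl\{\E_{q}(zt)\bigr\},
\]
since $\sum_{n\ge0}q^{\binom{n}{2}}(zt)^{n}/[n]_{q}!=\E_{q}(zt)$.

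The next step is to evaluate $D_{q,z}^{k}\E_{q}(zt)$. Using $D_{q}\E_{q}(w)=\E_{q}(qw)$ (immediate from differentiating the series term by term with $q^{\binom{n}{2}}/[n-1]_{q}!=q^{n-1}q^{\binom{n-1}{2}}/[n-1]_{q}!$), a short induction yields
\[
D_{q,z}^{k}\E_{q}(zt)=q^{\binom{k}{2}}t^{k}\E_{q}(q^{k}zt).
\]
I would then invoke the infinite-product identity $\E_{q}(w)=(-(1-q)w;q)_{\infty}$ to obtain
\[
\E_{q}(q^{k}zt)=\frac{(-(1-q)q^{k}zt;q)_{\infty}}{(-(1-q)zt;q)_{\infty}}\,\E_{q}(zt)=\frac{\E_{q}(zt)}{(-(1-q)zt;q)_{k}},
\]
so that a single factor $\E_{q}(zt)$ can be pulled out of the sum in $k$.

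Putting everything together,
\[
\Aa_{\alpha}(x,y,D_{q,z}|u)\bigl\{\E_{q}(zt)\bigr\}=\sum_{k=0}^{\infty}\frac{\p_{k,q}^{(\alpha)}(x;u)y^{k}}{[k]_{q}!}\,q^{\binom{k}{2}}t^{k}\,\frac{\E_{q}(zt)}{(-(1-q)zt;q)_{k}},
\]
which after factoring out $\E_{q}(zt)$ is exactly the right-hand side of the theorem. The only genuinely non-routine step is computing $D_{q,z}^{k}\E_{q}(zt)$ and recognizing the resulting $q$-shifted-factorial ratio; the rest is just composing the operator identity with a generating-function expansion as in the previous theorem, so I do not expect any serious obstacle beyond keeping the $q$-exponents $\binom{k}{2}$ and powers of $t,y$ aligned correctly.
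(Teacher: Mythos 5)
Your proposal is correct and follows essentially the same route as the paper: apply $\Aa_{\alpha}(x,y,D_{q,z}|u)$ to $\E_{q}(zt)$, use $D_{q,z}^{k}\E_{q}(zt)=q^{\binom{k}{2}}t^{k}\E_{q}(q^{k}zt)$, and then rewrite $\E_{q}(q^{k}zt)$ as $\E_{q}(zt)/(-(1-q)zt;q)_{k}$ via the infinite-product representation. No gaps.
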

\begin{proof}
   \begin{align*}
       \sum_{n=0}^{\infty}q^{\binom{n}{2}}\q_{n,q}^{(\alpha)}(x,y,z;u)\frac{t^{n}}{[n]_{q}!}&=\sum_{n=0}^{\infty}q^{\binom{n}{2}}\Aa_{\alpha}(x,yD_{q}|u)\{z^{n}\}\frac{t^{n}}{[n]_{q}!}\\
       &=\Aa_{\alpha}(x,y,D_{q}|u)\left\{\sum_{n=0}^{\infty}q^{\binom{n}{2}}\frac{(zt)^n}{[n]_{q}!}\right\}\\
       &=\Aa_{\alpha}(x,y,D_{q}|u)\left\{\E_{q}(zt)\right\}\\
       &=\sum_{k=0}^{\infty}\p_{k,q}^{(\alpha)}(x;u)\frac{y^{k}}{[k]_{q}!}D_{q}^{k}\{\E_{q}(zt)\}\\
       &=\sum_{k=0}^{\infty}q^{\binom{k}{2}}\p_{k,q}^{(\alpha)}(x;u)\frac{(ty)^{k}}{[k]_{q}!}\E_{q}(q^kzt)\\
       &=\E_{q}(zt)\sum_{k=0}^{\infty}q^{\binom{k}{2}}\frac{\p_{k,q}^{(\alpha)}(x;u)}{(-(1-q)zt;q)_{k}[k]_{q}!}(ty)^{k}.
   \end{align*} 
\end{proof}

\begin{theorem}[{\bf Mehler's formula}]
    \begin{align}
        &\sum_{n=0}^{\infty}\q_{n,q}^{(\alpha)}(x,y,z;u)\p_{n,q}^{(\beta)}(w)\frac{t^n}{[n]_{q}!}\nonumber\\
        &=\e_{q}(wzt)\sum_{i=0}^{\infty}\frac{\A_{q,i}^{\alpha}(ywt)(yt)^i}{[i]_{q}!}\sum_{k=0}^{\infty}u^{\binom{k}{2}}\frac{\A_{q,k+i}^{\beta}(zt)((1-q)wzt;q)_{k+i}(xyt)^k}{[k]_{q}!}\e_{q}(q^{i}u^{k}ywzt,u),
    \end{align}
where
\begin{equation}
    \A_{q,k}^{\beta}(t)=\sum_{n=0}^{\infty}a_{n+k}^{(\alpha)}\frac{t^n}{[n]_{q}!}.
\end{equation}
\end{theorem}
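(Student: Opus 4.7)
The plan is to interpret both factors as operators acting on monomials. From the theorem already established, $\q_{n,q}^{(\alpha)}(x,y,z;u) = \Aa_{\alpha}(x,y,D_{q,z}|u)\{z^n\}$, and the classical identity $\p_{n,q}^{(\beta)}(w) = \A_{q}^{\beta}(D_{q,w})\{w^n\}$, with $\A_{q}^{\beta}(D_{q,w}) = \sum_{l\geq 0} a_{l}^{(\beta)} D_{q,w}^{l}/[l]_{q}!$, is immediate from the binomial-type expansion in Theorem \ref{theo_iden1}. Since $D_{q,z}$ and $D_{q,w}$ act on independent variables they commute, so one can pull both operators outside the $n$-sum and recognize $\sum_{n} (wz)^n t^n/[n]_q! = \e_q(wzt)$:
\begin{equation*}
\sum_{n=0}^{\infty}\q_{n,q}^{(\alpha)}(x,y,z;u)\p_{n,q}^{(\beta)}(w)\frac{t^{n}}{[n]_{q}!} = \Aa_{\alpha}(x,y,D_{q,z}|u)\,\A_{q}^{\beta}(D_{q,w})\{\e_{q}(wzt)\}.
\end{equation*}

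I would apply $\Aa_{\alpha}$ first. Using $D_{q,z}^{k}\e_{q}(wzt) = (wt)^{k}\e_{q}(wzt)$ together with the defining generating function (\ref{eqn_def_uqApp1}), the inner sum collapses to $\e_{q}(wzt)\A_{q}^{\alpha}(wyt)\e_{q}(xwyt,u)$. Now $\A_{q}^{\beta}(D_{q,w})$ must be applied to this triple product in $w$; the natural tool is the $q$-Leibniz rule (\ref{eqn_leibniz}), used twice --- first to split $\A_{q}^{\alpha}(wyt)\e_{q}(xwyt,u)$ from $\e_{q}(wzt)$, then to split the two factors inside the brackets. The elementary building blocks are $D_{q,w}^{r}\A_{q}^{\alpha}(wyt) = (yt)^{r}\A_{q,r}^{\alpha}(wyt)$ (immediate from the power series of $\A_{q,r}^{\alpha}$), $D_{q,w}^{r}\e_{q}(cw) = c^{r}\e_{q}(cw)$, and the $u$-deformed analogue $D_{q,w}^{r}\e_{q}(cw,u) = c^{r}u^{\binom{r}{2}}\e_{q}(u^{r}cw,u)$, the latter proved by a short induction on $r$. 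In each Leibniz step the factor $q^{k(k-n)}$ is exactly cancelled by the $q^{k}$-shift inside the second factor, so every extraneous $q$-power disappears.

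Collecting the expansions produces a triple sum indexed by the outer Leibniz index $l$, the middle index $k$, and the innermost index $i$. Reindexing with $p = k - i$ and $m = l - k$ frees the $m$-sum, which by the definition of $\A_{q,\bullet}^{\beta}$ evaluates to $\sum_{m}a_{i+p+m}^{(\beta)}(zt)^{m}/[m]_{q}! = \A_{q,i+p}^{\beta}(zt)$; the accompanying factor $\e_{q}(q^{i+p}wzt)$ rewrites as $((1-q)wzt;q)_{i+p}\,\e_{q}(wzt)$, pulling $\e_{q}(wzt)$ out front. Relabelling $p\mapsto k$ matches the right-hand side term by term. The main obstacle is the bookkeeping in the double Leibniz expansion --- aligning all $q$- and $u$-exponents and checking their cancellation --- together with verifying that the $m$-summation collapses cleanly into $\A_{q,i+k}^{\beta}(zt)$, so that no residual dependence on the raw coefficients $a_{n}^{(\beta)}$ remains in the final identity.
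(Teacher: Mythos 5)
Your proof is correct, but it is organized genuinely differently from the paper's. The paper represents only the first factor as an operator, writing the sum as $\Aa_{\alpha}(x,y,D_{q,z}|u)\{\A_{q}^{\beta}(zt)\e_{q}(wzt)\}$, expands this by one Leibniz rule in $z$, and then must still evaluate the residual series $\sum_{n}\p_{n+k,q}^{(\alpha)}(x;u)(ywt)^{n}/[n]_{q}!$ by recognizing it as $(yw)^{-k}D_{q,t}^{k}\{\A_{q}^{\alpha}(ywt)\e_{q}(ywxt,u)\}$ and applying a second Leibniz rule in the variable $t$. You instead represent both factors as operators acting on $\e_{q}(wzt)$, dispose of $\Aa_{\alpha}(x,y,D_{q,z}|u)$ trivially via the eigenfunction property $D_{q,z}^{k}\e_{q}(wzt)=(wt)^{k}\e_{q}(wzt)$, and concentrate all the work in a nested double Leibniz expansion in the single variable $w$ applied to $\e_{q}(wzt)\A_{q}^{\alpha}(wyt)\e_{q}(xwyt,u)$. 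The toolkit is the same in both arguments (the $q$-Leibniz rule with cancellation of the $q^{k(k-n)}$ factors against the $q^{k}$-shifts, the identity $\e_{q}(q^{k}\zeta)=((1-q)\zeta;q)_{k}\e_{q}(\zeta)$, and $D_{q}^{r}\e_{q}(cw,u)=c^{r}u^{\binom{r}{2}}\e_{q}(u^{r}cw,u)$), and your reindexing $p=k-i$, $m=l-k$ reproduces exactly the paper's final triple sum with the correct $q$- and $u$-exponents; what your organization buys is the avoidance of the somewhat ad hoc $D_{q,t}^{k}$ device and a symmetric treatment of the two determining functions, at the cost of a single heavier bookkeeping step. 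One remark: carried to the end, your computation yields the deformed exponential factor $\e_{q}(q^{i}u^{k}xywt,u)$, which agrees with the penultimate line of the paper's own derivation; the factor $\e_{q}(q^{i}u^{k}ywzt,u)$ printed in the theorem statement appears to be a typo ($z$ written for $x$), so the discrepancy is not a flaw in your argument.
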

\begin{proof}
    \begin{align*}
        &\sum_{n=0}^{\infty}\q_{n,q}^{(\alpha)}(x,y,z;u)\p_{n,q}^{(\beta)}(w)\frac{t^n}{[n]_{q}!}\\
        &=\sum_{n=0}^{\infty}\Aa_{\alpha}(x,y,D_{q,z}|u)\{z^n\}\p_{n,q}^{(\beta)}(w)\frac{t^n}{[n]_{q}!}\\
        &=\Aa_{\alpha}(x,y,D_{q,z}|u)\left\{\sum_{n=0}^{\infty}\p_{n,q}^{(\beta)}(w)\frac{(zt)^n}{[n]_{q}!}\right\}\\
        &=\Aa_{\alpha}(x,y,D_{q,z}|u)\left\{\A_{q}^{\beta}(zt)\e_{q}(wzt)\right\}\\
        &=\sum_{n=0}^{\infty}\frac{\p_{n,q}^{(\alpha)}(x;u)y^{n}}{[n]_{q}!}D_{q,z}^{n}\left\{\A_{q}^{\beta}(zt)\e_{q}(wzt)\right\}\\
        &=\sum_{n=0}^{\infty}\frac{\p_{n,q}^{(\alpha)}(x;u)y^{n}}{[n]_{q}!}\sum_{k=0}^{n}q^{k(k-n)}\qbinom{n}{k}_{q}D_{q,z}^{k}\{\A_{q}^{\beta}(zt)\}D_{q,z}^{n-k}\{\e_{q}(q^{k}wzt)\}\\
        &=\sum_{n=0}^{\infty}\frac{\p_{n,q}^{(\alpha)}(x;u)y^{n}}{[n]_{q}!}\sum_{k=0}^{n}\qbinom{n}{k}_{q}t^k\A_{q,k}^{\beta}(zt)(wt)^{n-k}\e_{q}(q^{k}wzt)\\
        &=\e_{q}(wzt)\sum_{n=0}^{\infty}\frac{\p_{n,q}^{(\alpha)}(x;u)(yt)^{n}}{[n]_{q}!}\sum_{k=0}^{n}\qbinom{n}{k}_{q}\A_{q,k}^{\beta}(zt)w^{n-k}((1-q)wzt;q)_{k}\\
        &=\e_{q}(wzt)\sum_{k=0}^{\infty}\frac{\A_{q,k}^{\beta}(zt)((1-q)wzt;q)_{k}(yt)^k}{[k]_{q}!}\sum_{n=0}^{\infty}\frac{\p_{n+k,q}^{(\alpha)}(x;u)}{[n]_{q}!}(ytw)^{n}.
    \end{align*}
As
\begin{align*}
    \sum_{n=0}^{\infty}\frac{(ywt)^{n}}{[n]_{q}!}\p_{n+k,q}^{(\alpha)}(x;u)&=\frac{1}{(yw)^k}D_{q,t}^k\left\{\A_{q}^{\alpha}(ywt)\e_{q}(ywxt,u)\right\}\\
    &=\sum_{i=0}^{k}\qbinom{k}{i}_{q}u^{\binom{k-i}{2}}x^{k-i}\A_{q,i}^{\alpha}(ywt)\e_{q}(q^{i}u^{k-i}ywxt,u),
\end{align*}
then
\begin{align*}
    &\sum_{n=0}^{\infty}\q_{n,q}^{(\alpha)}(x,y,z;u)\p_{n,q}^{(\beta)}(w)\frac{t^n}{[n]_{q}!}\\
    &=\e_{q}(wzt)\sum_{k=0}^{\infty}\frac{\A_{q,k}^{\beta}(zt)((1-q)wzt;q)_{k}(yt)^k}{[k]_{q}!}\\
    &\hspace{2cm}\times\sum_{i=0}^{k}\qbinom{k}{i}_{q}u^{\binom{k-i}{2}}x^{k-i}\A_{q,i}^{\alpha}(ywt)\e_{q}(q^{i}u^{k-i}ywxt,u)\\
    &=\e_{q}(wzt)\sum_{i=0}^{\infty}\frac{\A_{q,i}^{\alpha}(ywt)(yt)^i}{[i]_{q}!}\sum_{k=0}^{\infty}u^{\binom{k}{2}}\frac{\A_{q,k+i}^{\beta}(zt)((1-q)wzt;q)_{k+i}(xyt)^k}{[k]_{q}!}\e_{q}(q^{i}u^{k}ywzt,u).
\end{align*}

\end{proof}

\begin{theorem}[{\bf Rogers formula}]
    \begin{align}
        &\sum_{n=0}^{\infty}\sum_{m=0}^{\infty}\q_{n+m,q}^{(\alpha)}(x,y,z|u)\frac{t^{n}}{[n]_{q}!}\frac{s^{m}}{[m]_{q}!}\nonumber\\
        &\hspace{1cm}=\e_{q}(zt)\e_{q}(zs)\sum_{n=0}^{\infty}\frac{\p_{n,q}^{(\alpha)}(x;u)y^n}{[n]_{q}!}\sum_{k=0}^{n}\qbinom{n}{k}_{q}t^{k}s^{n-k}((1-q)zs;q)_{k}.
    \end{align}
\end{theorem}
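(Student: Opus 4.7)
The plan is to mirror the strategy used for Mehler's formula by exploiting the operator representation $\q_{n+m,q}^{(\alpha)}(x,y,z;u) = \Aa_{\alpha}(x,y,D_{q,z}|u)\{z^{n+m}\}$ established earlier. Applying this identity term-by-term and commuting the operator with the (formal) double series, the left-hand side collapses to
\begin{equation*}
    \Aa_{\alpha}(x,y,D_{q,z}|u)\!\left\{\sum_{n,m\geq 0}\frac{(zt)^{n}(zs)^{m}}{[n]_{q}!\,[m]_{q}!}\right\}=\Aa_{\alpha}(x,y,D_{q,z}|u)\bigl\{\e_{q}(zt)\e_{q}(zs)\bigr\}.
\end{equation*}

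The core of the proof is then the evaluation of $D_{q,z}^{n}\{\e_{q}(zt)\e_{q}(zs)\}$ through the $q$-Leibniz rule (\ref{eqn_leibniz}). Two ingredients are needed. First, $D_{q,z}^{k}\e_{q}(zt)=t^{k}\e_{q}(zt)$, which is immediate from the series definition. Second, the identity $\e_{q}(q^{k}w)=((1-q)w;q)_{k}\e_{q}(w)$, which follows from the infinite product representation $1/\e_{q}(w)=((1-q)w;q)_{\infty}$. Combining these, the $(n-k)$-th $z$-derivative of $\e_{q}(q^{k}zs)$ equals $(q^{k}s)^{n-k}((1-q)zs;q)_{k}\e_{q}(zs)$, and the exponents $q^{k(k-n)}$ from Leibniz together with $q^{k(n-k)}$ from the shift cancel exactly, yielding
\begin{equation*}
    D_{q,z}^{n}\bigl\{\e_{q}(zt)\e_{q}(zs)\bigr\}=\e_{q}(zt)\e_{q}(zs)\sum_{k=0}^{n}\qbinom{n}{k}_{q}t^{k}s^{n-k}((1-q)zs;q)_{k}.
\end{equation*}

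Substituting this into the series expansion $\Aa_{\alpha}(x,y,D_{q,z}|u)=\sum_{n\geq 0}\p_{n,q}^{(\alpha)}(x;u)y^{n}D_{q,z}^{n}/[n]_{q}!$ and factoring the common $\e_{q}(zt)\e_{q}(zs)$ out of the sum produces the claimed right-hand side. The main obstacle I anticipate is the bookkeeping of the $q$-powers: one must verify carefully that $q^{k(k-n)}\cdot q^{k(n-k)}=1$, and check that applying Leibniz with $f=\e_{q}(zt)$ and $g=\e_{q}(zs)$ (rather than the reverse) is precisely what is responsible for the asymmetric appearance of the factor $((1-q)zs;q)_{k}$, which involves only $s$ and not $t$, on the right-hand side. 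Once those checks are made, everything else is routine series manipulation paralleling the Mehler computation already in the paper.
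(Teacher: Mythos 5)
Your proposal is correct and follows essentially the same route as the paper: the operator representation $\q_{n+m,q}^{(\alpha)}(x,y,z;u)=\Aa_{\alpha}(x,y,D_{q,z}|u)\{z^{n+m}\}$, exchange of the operator with the double series, the $q$-Leibniz rule applied to $\e_{q}(zt)\e_{q}(zs)$ with the factors in exactly that order, cancellation of $q^{k(k-n)}$ against $q^{k(n-k)}$, and the identity $\e_{q}(q^{k}zs)=((1-q)zs;q)_{k}\e_{q}(zs)$. The only (welcome) addition is that you justify this last identity via the product representation of $\e_{q}$, which the paper uses silently.
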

\begin{proof}
    \begin{align*}
        &\sum_{n=0}^{\infty}\sum_{m=0}^{\infty}\q_{n+m,q}^{(\alpha)}(x,y,z|u)\frac{t^{n}}{[n]_{q}!}\frac{s^{m}}{[m]_{q}!}\\
        &=\sum_{n=0}^{\infty}\sum_{m=0}^{\infty}\Aa_{\alpha}(x,y,D_{q,z}|u)\{z^{n+m}\}\frac{t^{n}}{[n]_{q}!}\frac{s^{m}}{[m]_{q}!}\\
        &=\Aa_{\alpha}(x,y,D_{q,z}|u)\left\{\sum_{n=0}^{\infty}\sum_{m=0}^{\infty}\frac{(zt)^{n}}{[n]_{q}!}\frac{(zs)^{m}}{[m]_{q}!}\right\}\\
        &=\Aa_{\alpha}(x,y,D_{q,z}|u)\left\{\e_{q}(zt)\e_{q}(zs)\right\}\\
        &=\sum_{n=0}^{\infty}\frac{\p_{n,q}^{(\alpha)}(x;u)y^n}{[n]_{q}!}D_{q,z}^{n}\left\{\e_{q}(zt)\e_{q}(zs)\right\}\\
        &=\sum_{n=0}^{\infty}\frac{\p_{n,q}^{(\alpha)}(x;u)y^n}{[n]_{q}!}\sum_{k=0}^{n}q^{k(k-n)}\qbinom{n}{k}_{q}D_{q,z}^{k}\left\{\e_{q}(zt)\right\}D_{q,z}^{n-k}\{\e_{q}(q^kzs)\}\\
        &=\sum_{n=0}^{\infty}\frac{\p_{n,q}^{(\alpha)}(x;u)y^n}{[n]_{q}!}\sum_{k=0}^{n}\qbinom{n}{k}_{q}t^{k}\e_{q}(zt)s^{n-k}\e_{q}(q^kzs)\\
        &=\e_{q}(zt)\e_{q}(zs)\sum_{n=0}^{\infty}\frac{\p_{n,q}^{(\alpha)}(x;u)y^n}{[n]_{q}!}\sum_{k=0}^{n}\qbinom{n}{k}_{q}t^{k}s^{n-k}((1-q)zs;q)_{k}.
    \end{align*}
\end{proof}

\section{Examples of deformed $q$-Appell polynomials}

\subsection{$u$-deformed $q$-Bernoulli numbers and polynomials}

The $u$-deformed $q$-Bernoulli polynomials of order $\alpha$ are defined by
\begin{equation}
    \left(\frac{t}{\e_{q}(t)-1}\right)^{\alpha}\e_{q}(tx,u)=\sum_{n=0}^{\infty}\B_{n,q}^{(\alpha)}(x;u)\frac{t^{n}}{[n]_{q}!},
\end{equation}
where
\begin{align}
        \B_{n,q}^{(\alpha)}(x;u)=\sum_{k=0}^{n}\qbinom{n}{k}_{q}u^{\binom{n-k}{2}}\B_{k,q}^{(\alpha)}x^{n-k}
\end{align}
and the $\B_{n,q}^{(\alpha)}$ are the $q$-Bernoulli numbers of order $\alpha$, defined by the following generating function
\begin{equation}\label{eqn_qBerNum}
        \left(\frac{t}{\e_{q}(t)-1}\right)^{\alpha}=\sum_{n=0}^{\infty}\B_{n,q}^{(\alpha)}\frac{t^n}{[n]_{q}!}.
    \end{equation}    
The $u$-deformed bivariate $q$-Bernoulli polynomials of order $\alpha$ are defined by
\begin{equation}
    \left(\frac{t}{\e_{q}(t)-1}\right)^{\alpha}\e_{q}(tx)\e_{q}(ty,u)=\sum_{n=0}^{\infty}\B_{n,q}^{(\alpha)}(x,y;u)\frac{t^{n}}{[n]_{q}!},
\end{equation}
Some properties of the $\B_{n,q}^{(\alpha)}(x,y;q)$ are:

For all $\alpha\in\C$ and for $n\geq0$,
    \begin{align}
        \B_{n,q}^{(\alpha)}(x,y;u)&=\sum_{k=0}^{n}\qbinom{n}{k}_{q}u^{\binom{n-k}{2}}\B_{k,q}^{(\alpha)}(x)y^{n-k}.\\
        \B_{n,q}^{(\alpha)}(x,y;u)&=\sum_{k=0}^{n}\qbinom{n}{k}_{q}\B_{k,q}^{(\alpha)}(y;u)x^{n-k}.\\
        \B_{n,q}^{(\alpha)}(x,y;u)&=\sum_{k=0}^{n}\qbinom{n}{k}_{q}u^{\binom{k}{2}}\AAA_{k,q}(a;u)y^{k}\B_{n-k,q}^{(\alpha)}(x,y;u).
    \end{align}
Its relation with the deformed homogeneous polynomials and with the deformed $q$-exponential operator
\begin{align}
        \B_{n,q}^{(\alpha)}(x,y;u)&=\sum_{k=0}^{n}\qbinom{n}{k}_{q}\B_{k,q}^{(\alpha)}\rr_{n-k}(x,y;u|q),\\
        \rr_{n}(x,y;u|q)&=\sum_{k=0}^{n}\qbinom{n}{k}_{q}\B_{k,q}^{(\alpha)}(x)\B_{n-k,q}^{(-\alpha)}(y;u).
    \end{align}
and
\begin{align}
        \B_{n,q}^{(\alpha)}(x,y;u)&=\T(yD_{q}|u)\{\B_{n,q}^{(\alpha)}(x)\},\\
        \T(yD_{q}|u)\left\{\left(\frac{t}{\e_{q}(t)-1}\right)^{\alpha}\e_{q}(tx)\right\}&=\left(\frac{t}{\e_{q}(t)-1}\right)^{\alpha}\e_{q}(tx)\e_{q}(ty,u).
    \end{align}
Its $q$-derivatives,
\begin{align}
        D_{q,x}\B_{n,q}^{(\alpha)}(x,y;u)&=[n]_{q}\B_{n-1,q}^{(\alpha)}(x,y;u).\\
        D_{q,y}\B_{n,q}^{(\alpha)}(x,y;u)&=[n]_{q}\B_{n-1,q}^{(\alpha)}(x,uy;u).
    \end{align}
Addiction properties:
Let $n\in\N$ and $\alpha,\beta$ be real or complex numbers. Then we have
    \begin{align}
        \B_{n,q}^{(\alpha+\beta)}(x,y;u)&=\sum_{k=0}^{n}\qbinom{n}{k}_{q}\B_{k,q}^{(\alpha)}(x)\B_{n-k,q}^{(\beta)}(y;v).\\
        \B_{n,q}^{(2\alpha)}(x,y;u)&=\sum_{k=0}^{n}\qbinom{n}{k}_{q}\B_{k,q}^{(\alpha)}(x)\B_{n-k,q}^{(\alpha)}(y;u).
    \end{align}

\subsection{$u$-deformed $q$-Euler numbers and polynomials}

The $u$-deformed $q$-Euler polynomials of order $\alpha$ are defined by
\begin{equation}
    \left(\frac{2}{\e_{q}(t)+1}\right)^{\alpha}\e_{q}(tx,u)=\sum_{n=0}^{\infty}\E_{n,q}^{(\alpha)}(x;u)\frac{t^{n}}{[n]_{q}!},
\end{equation}
where
\begin{align}
        \E_{n,q}^{(\alpha)}(x;u)=\sum_{k=0}^{n}\qbinom{n}{k}_{q}u^{\binom{n-k}{2}}\E_{k,q}^{(\alpha)}x^{n-k}
\end{align}
and the $\E_{n,q}^{(\alpha)}$ are the $q$-Euler numbers of order $\alpha$, defined by the following generating function
\begin{equation}\label{eqn_qEulerNum}
        \left(\frac{2}{\e_{q}(t)+1}\right)^{\alpha}=\sum_{n=0}^{\infty}\E_{n,q}^{(\alpha)}\frac{t^n}{[n]_{q}!}.
    \end{equation}    
The $u$-deformed bivariate $q$-Euler polynomials of order $\alpha$ are defined by
\begin{equation}
    \left(\frac{2}{\e_{q}(t)+1}\right)^{\alpha}\e_{q}(tx)\e_{q}(ty,u)=\sum_{n=0}^{\infty}\E_{n,q}^{(\alpha)}(x,y;u)\frac{t^{n}}{[n]_{q}!},
\end{equation}
Some properties of the $\E_{n,q}^{(\alpha)}(x,y;q)$ are:

For all $\alpha\in\C$ and for $n\geq0$,
    \begin{align}
        \E_{n,q}^{(\alpha)}(x,y;u)&=\sum_{k=0}^{n}\qbinom{n}{k}_{q}u^{\binom{n-k}{2}}\E_{k,q}^{(\alpha)}(x)y^{n-k}.\\
        \E_{n,q}^{(\alpha)}(x,y;u)&=\sum_{k=0}^{n}\qbinom{n}{k}_{q}\E_{k,q}^{(\alpha)}(y;u)x^{n-k}.\\
        \E_{n,q}^{(\alpha)}(x,y;u)&=\sum_{k=0}^{n}\qbinom{n}{k}_{q}u^{\binom{k}{2}}\AAA_{k,q}(a;u)y^{k}\E_{n-k,q}^{(\alpha)}(x,y;u).
    \end{align}
Its relation with the deformed homogeneous polynomials and with the deformed $q$-exponential operator
\begin{align}
        \E_{n,q}^{(\alpha)}(x,y;u)&=\sum_{k=0}^{n}\qbinom{n}{k}_{q}\E_{k,q}^{(\alpha)}\rr_{n-k}(x,y;u|q),\\
        \rr_{n}(x,y;u|q)&=\sum_{k=0}^{n}\qbinom{n}{k}_{q}\E_{k,q}^{(\alpha)}(x)\E_{n-k,q}^{(-\alpha)}(y;u).
    \end{align}
and
\begin{align}
        \E_{n,q}^{(\alpha)}(x,y;u)&=\T(yD_{q}|u)\{\E_{n,q}^{(\alpha)}(x)\},\\
        \T(yD_{q}|u)\left\{\left(\frac{2}{\e_{q}(t)+1}\right)^{\alpha}\e_{q}(tx)\right\}&=\left(\frac{2}{\e_{q}(t)+1}\right)^{\alpha}\e_{q}(tx)\e_{q}(ty,u).
    \end{align}
Its $q$-derivatives,
\begin{align}
        D_{q,x}\E_{n,q}^{(\alpha)}(x,y;u)&=[n]_{q}\E_{n-1,q}^{(\alpha)}(x,y;u).\\
        D_{q,y}\E_{n,q}^{(\alpha)}(x,y;u)&=[n]_{q}\E_{n-1,q}^{(\alpha)}(x,uy;u).
    \end{align}
Addiction properties:
Let $n\in\N$ and $\alpha,\beta$ be real or complex numbers. Then we have
    \begin{align}
        \E_{n,q}^{(\alpha+\beta)}(x,y;u)&=\sum_{k=0}^{n}\qbinom{n}{k}_{q}\E_{k,q}^{(\alpha)}(x)\E_{n-k,q}^{(\beta)}(y;v).\\
        \E_{n,q}^{(2\alpha)}(x,y;u)&=\sum_{k=0}^{n}\qbinom{n}{k}_{q}\E_{k,q}^{(\alpha)}(x)\E_{n-k,q}^{(\alpha)}(y;u).
    \end{align}

\subsection{$u$-deformed $q$-Genocchi numbers and polynomials}

The $u$-deformed $q$-Genocchi polynomials of order $\alpha$ are defined by
\begin{equation}
    \left(\frac{2t}{\e_{q}(t)+1}\right)^{\alpha}\e_{q}(tx,u)=\sum_{n=0}^{\infty}\G_{n,q}^{(\alpha)}(x;u)\frac{t^{n}}{[n]_{q}!},
\end{equation}
where
\begin{align}
        \G_{n,q}^{(\alpha)}(x;u)=\sum_{k=0}^{n}\qbinom{n}{k}_{q}u^{\binom{n-k}{2}}\G_{k,q}^{(\alpha)}x^{n-k}
\end{align}
and the $\G_{n,q}^{(\alpha)}$ are the $q$-Genocchi numbers of order $\alpha$, defined by the following generating function
\begin{equation}\label{eqn_qGenNum}
        \left(\frac{2t}{\e_{q}(t)+1}\right)^{\alpha}=\sum_{n=0}^{\infty}\G_{n,q}^{(\alpha)}\frac{t^n}{[n]_{q}!}.
    \end{equation}    
The $u$-deformed bivariate $q$-Genocchi polynomials of order $\alpha$ are defined by
\begin{equation}
    \left(\frac{2t}{\e_{q}(t)+1}\right)^{\alpha}\e_{q}(tx)\e_{q}(ty,u)=\sum_{n=0}^{\infty}\G_{n,q}^{(\alpha)}(x,y;u)\frac{t^{n}}{[n]_{q}!},
\end{equation}
Some properties of the $\G_{n,q}^{(\alpha)}(x,y;q)$ are:

For all $\alpha\in\C$ and for $n\geq0$,
    \begin{align}
        \G_{n,q}^{(\alpha)}(x,y;u)&=\sum_{k=0}^{n}\qbinom{n}{k}_{q}u^{\binom{n-k}{2}}\G_{k,q}^{(\alpha)}(x)y^{n-k}.\\
        \G_{n,q}^{(\alpha)}(x,y;u)&=\sum_{k=0}^{n}\qbinom{n}{k}_{q}\G_{k,q}^{(\alpha)}(y;u)x^{n-k}.\\
        \G_{n,q}^{(\alpha)}(x,y;u)&=\sum_{k=0}^{n}\qbinom{n}{k}_{q}u^{\binom{k}{2}}\AAA_{k,q}(a;u)y^{k}\G_{n-k,q}^{(\alpha)}(x,y;u).
    \end{align}
Its relation with the deformed homogeneous polynomials and with the deformed $q$-exponential operator is
\begin{align}
        \G_{n,q}^{(\alpha)}(x,y;u)&=\sum_{k=0}^{n}\qbinom{n}{k}_{q}\G_{k,q}^{(\alpha)}\rr_{n-k}(x,y;u|q),\\
        \rr_{n}(x,y;u|q)&=\sum_{k=0}^{n}\qbinom{n}{k}_{q}\G_{k,q}^{(\alpha)}(x)\G_{n-k,q}^{(-\alpha)}(y;u).
    \end{align}
and
\begin{align}
        \G_{n,q}^{(\alpha)}(x,y;u)&=\T(yD_{q}|u)\{\G_{n,q}^{(\alpha)}(x)\},\\
        \T(yD_{q}|u)\left\{\left(\frac{2t}{\e_{q}(t)+1}\right)^{\alpha}\e_{q}(tx)\right\}&=\left(\frac{2t}{\e_{q}(t)+1}\right)^{\alpha}\e_{q}(tx)\e_{q}(ty,u).
    \end{align}
Its $q$-derivatives,
\begin{align}
        D_{q,x}\G_{n,q}^{(\alpha)}(x,y;u)&=[n]_{q}\G_{n-1,q}^{(\alpha)}(x,y;u).\\
        D_{q,y}\G_{n,q}^{(\alpha)}(x,y;u)&=[n]_{q}\G_{n-1,q}^{(\alpha)}(x,uy;u).
    \end{align}
Addiction properties:
Let $n\in\N$ and $\alpha,\beta$ be real or complex numbers. Then we have
    \begin{align}
        \G_{n,q}^{(\alpha+\beta)}(x,y;u)&=\sum_{k=0}^{n}\qbinom{n}{k}_{q}\G_{k,q}^{(\alpha)}(x)\G_{n-k,q}^{(\beta)}(y;v).\\
        \G_{n,q}^{(2\alpha)}(x,y;u)&=\sum_{k=0}^{n}\qbinom{n}{k}_{q}\G_{k,q}^{(\alpha)}(x)\G_{n-k,q}^{(\alpha)}(y;u).
    \end{align}

\end{document}